\newtheorem{thm}{Theorem}
\newtheorem{lem}{Lemma}
\newtheorem{prop}{Proposition}
\newtheorem{rem}{Remark}
\newcommand{\N}{{\mathbb N}}
\newcommand{\Z}{{\mathbb Z}}
\newcommand{\R}{{\mathbb R}}
\newcommand{\Tor}{{\mathbb T}}
\title[Small sumsets in $\R$]{Small sumsets in $\R$ : a continuous $3k-4$ theorem}
\author[Anne de Roton]{Anne de Roton}
\thanks{Soutien de l'ANR C\ae sar, ANR 12 - BS01 - 0011}
\email{anne.de-roton@univ-lorraine.fr}
\address{Universit\' e de Lorraine, Institut Elie Cartan de Lorraine, UMR 7502, Vandoeuvre-l\`es-Nancy, F-54506, France\\
CNRS, Institut Elie Cartan de Lorraine, UMR 7502, Vandoeuvre-l\`es-Nancy, F-54506, France.}
\begin{document}

\begin{abstract}
We prove a continuous Freiman's $3k-4$ theorem for small sumsets in $\R$ by using some ideas from Ruzsa's work on measure of sumsets in $\R$ as well as some graphic representation of density functions of sets. We thereby get some structural properties of $A$, $B$ and $A+B$ when $\lambda(A+B)<\lambda(A)+\lambda(B)+\min(\lambda(A),\lambda(B))$. We also give some structural information for sets of large density with small sumset and characterize the extremal sets for which equality holds in the lower bounds for $\lambda(A+B)$.

\end{abstract}

\maketitle

\section{Introduction}
Inverse problems for small sumsets study the structural properties of sets $A$ and $B$ when their sumset $A+B=\{a+b,\, a\in A, \, b\in B\}$ is small (see \cite{Tao_Vu} or \cite{Nathanson} for an overview on this subject).
In 1959, Freiman \cite{Freiman1959} proved that a set $A$ of integers such that $|A+A|\leq 3|A|-4$, where $|A|$ denotes the number of elements in $A$, is contained in an arithmetic progression of length $|A+A|-|A|+1$. This result is usually referred to as Freiman's $(3k-4)$ theorem. It has been refined in many ways and generalised to finite sets in other groups or semi-groups. The most complete version of this theorem for integers can be found in \cite{Gr}, chapter 7.

In this paper, we consider the addition of two bounded sets $A$ and $B$ of real numbers. We establish a continuous analogue of the complete Freiman's $(3k-4)$ theorem and study the structures of the extremal sets for which the lower bounds are attained. We also prove some results on sets of real numbers so far unknown for sets of integers. Our first main result can be read as follows ($\lambda$ is the inner Lebesgue measure on $\R$ and $\rm{diam}(A)=\sup(A)-\inf(A)$ is the diameter of $A$).
\begin{thm}\label{intro_3k-4}
Let $A$ and $B$ be measurable bounded subsets of $\mathbb{R}$ such that $\lambda(A),\lambda(B)\not=0$. 
If 
\begin{enumerate}
\item[i)] either $\lambda(A+B)< \lambda(A)+\lambda(B)+\min(\lambda(A),\lambda(B))$; 
\item[ii)] or ${\rm{diam}}(B)\leq {\rm{diam}}(A)$ and $\lambda(A+B)< \lambda(A)+2\lambda(B)$;
\end{enumerate}
then 
\begin{enumerate}
\item ${\rm{diam}}(A)\leq \lambda(A+B)-\lambda(B)$, 
\item ${\rm{diam}}(B)\leq \lambda(A+B)-\lambda(A)$, 
\item there exists an interval $I$ of length at least $\lambda(A)+\lambda(B)$ included in $A+B$.
\end{enumerate}
\end{thm}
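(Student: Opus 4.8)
The plan is to cut the problem down to a finitary one and then bound the ``gaps'' of $A+B$ by the gaps of $A$ and of $B$.

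\emph{Reductions and reformulation.} After translating I would assume $\inf A=\inf B=0$ and write $a=\mathrm{diam}(A)$, $b=\mathrm{diam}(B)$, $\alpha=\lambda(A)$, $\beta=\lambda(B)$, $\sigma=\lambda(A+B)$ and $\delta:=\sigma-\alpha-\beta$. The one-dimensional Brunn--Minkowski inequality gives $\delta\ge 0$, while $A+B\subseteq[0,a+b]$ gives $\delta\le(a-\alpha)+(b-\beta)$; in particular the total ``gap measure'' $\lambda([0,a+b]\setminus(A+B))$ equals $(a-\alpha)+(b-\beta)-\delta$. In this language conclusion (1) reads $a-\alpha\le\delta$, conclusion (2) reads $b-\beta\le\delta$, conclusion (3) asks that some connected component of $A+B$ have length $\ge\alpha+\beta$, and the hypotheses become: (i) $\delta<\min(\alpha,\beta)$, or (ii) $b\le a$ and $\delta<\beta$. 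By inner regularity I would then replace $A$, $B$ by finite unions of compact intervals $A'\subseteq A$, $B'\subseteq B$ with $\lambda(A')\to\alpha$, $\lambda(B')\to\beta$ and with points of $A'$, $B'$ arbitrarily close to the extremities of $A$, $B$; since $A'+B'\subseteq A+B$ one has $\sigma'\le\sigma$, the strict hypotheses are inherited once the approximation is fine enough, and all three conclusions pass to the limit (for (3), a convergent subsequence of the intervals $I'\subseteq A'+B'\subseteq A+B$ has an interval of length $\ge\alpha+\beta$ as limit, still inside $A+B$). So I may assume $A=\bigsqcup_{i=1}^{k}[a_i^-,a_i^+]$ and $B=\bigsqcup_{j=1}^{l}[b_j^-,b_j^+]$ with $a_1^-=b_1^-=0$, $a_k^+=a$, $b_l^+=b$.

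\emph{The core estimate.} I would prove, by induction on $k+l$, the combined statement: \emph{$\max(a-\alpha,b-\beta)\le\delta$, and the lengths of all but a longest connected component of $A+B$ sum to at most $\delta$.} The base case $k=l=1$ is trivial, since then $A+B=[0,a+b]$ and $\delta=0$. For the step: if $A+B$ is already an interval then $\sigma=a+b$, so $\delta=(a-\alpha)+(b-\beta)\ge\max(a-\alpha,b-\beta)$ and there are no other components; otherwise fix a connected component $(p,q)$ of $[0,a+b]\setminus(A+B)$. Since $A,B\subseteq A+B$, both split across it, $A=A'\sqcup A''$, $B=B'\sqcup B''$ with $A',B'\subseteq[0,p]$ (nonempty), $A'',B''\subseteq[q,a+b]$; writing $A+B$ as the union of the four Minkowski products $A'+B'$, $A'+B''$, $A''+B'$, $A''+B''$ and using that $A+B$ misses $(p,q)$, I would split $\sigma=\lambda((A+B)\cap[0,p])+\lambda((A+B)\cap[q,a+b])$, estimate each piece from below by Brunn--Minkowski on the relevant products, and feed the right-hand piece to the induction hypothesis applied to $(A'',B)$ and to $(A,B'')$ — each of which has strictly fewer intervals in one coordinate, and for which the hypothesis survives (the degenerate cases $A''=\varnothing$, $B''=\varnothing$ force $\sigma\ge 2\alpha+\beta$ or $\sigma\ge\alpha+2\beta$, and also $\mathrm{diam}(B)>\mathrm{diam}(A)$ or $\mathrm{diam}(A)>\mathrm{diam}(B)$, so they contradict (i) and rule out (ii), and may be excluded). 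Combining these bounds with $\lambda(A')+\lambda(A'')=\alpha$, $\lambda(B')+\lambda(B'')=\beta$, and with the fact that the gap $(p,q)$ of $A+B$ forces a gap of length $\ge q-p$ in \emph{both} $A$ and $B$ (while $\min A''\ge q$, $\min B''\ge q$ give further slack), yields the claim. Carrying out this accounting cleanly — tracking diameters and measures of all sub-pieces through the recursion, and treating the several degenerate configurations — is the real content and the step I expect to be the main obstacle. I would keep it under control by drawing each of $A$, $B$, $A+B$ as the graph of its density function $t\mapsto\lambda(S\cap[0,t])$ (nondecreasing, $1$-Lipschitz, piecewise affine), on which the effects of a gap and of Brunn--Minkowski are read off directly.

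\emph{Conclusion.} Conclusions (1) and (2) are precisely $\max(a-\alpha,b-\beta)\le\delta$. For (3): if $J$ is a longest component of $A+B$, the invariant gives $\sigma=\lambda(J)+(\text{other components})\le\lambda(J)+\delta$, so $\lambda(J)\ge\sigma-\delta=\alpha+\beta$, and $J$ is the required interval. Undoing the reductions finishes the proof of Theorem~\ref{intro_3k-4}.
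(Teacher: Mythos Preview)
Your reductions and the reformulation in terms of $\delta=\sigma-\alpha-\beta$ are fine, and deducing (3) from ``all but the longest component of $A+B$ have total length $\le\delta$'' is correct. The problem is the induction step, which you yourself mark as ``the main obstacle'' --- and it does not go through as sketched. When you cut along a gap $(p,q)$ of $A+B$ and split $A=A'\sqcup A''$, $B=B'\sqcup B''$, three things fail. First, the hypothesis need not pass to $(A'',B)$ or $(A,B'')$: you would need, say, $\lambda(A''+B)-\lambda(A'')-\lambda(B)<\lambda(B)$, and nothing in your setup controls the subpair's excess. Second, $A''+B$ and $A+B''$ overlap in $A''+B''$, so you cannot simply add the two inductive outputs; and on the left, $A'+B'$ need not sit inside $[0,p]$ (it avoids $(p,q)$ but may spill into $[q,2p]$), so Brunn--Minkowski on $A'+B'$ does not lower-bound $\lambda((A+B)\cap[0,p])$. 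Third, in the degenerate case $A''=\varnothing$ you only obtain $\lambda(A+B')+\lambda(A+B'')\ge 2\alpha+\beta$, and since these two sets can overlap above $q$, the conclusion $\sigma\ge 2\alpha+\beta$ does not follow. The known discrete $3k{-}4$ proofs do not proceed by a naive cut-and-induct on a gap either; they require a more delicate reduction.

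The paper's argument is quite different and avoids recursion entirely. For (1) and (2) it appeals to Ruzsa's lower bound (Lemma~\ref{ruzsa1} and Theorem~\ref{cor_ruzsa}), obtained by reducing modulo $D_B$ and invoking Raikov's theorem on $\R/D_B\Z$. For (3) it first uses (1) to secure $\Delta:=\lambda(A)+\lambda(B)-D_A>0$, then works with the cumulative functions $g(x)=\lambda(A\cap[0,x])+\lambda(B\cap[0,x])$ and a shifted variant $h$: Lemma~\ref{lem_mes} partitions $[0,D_A]$ into zones $Z_1\subset A+B-D_A$, $Z_3\subset A+B$, and $Z_2$ contained in both; the hypothesis then forces exactly one crossing from $Z_1$ to $Z_3$ (each extra ``down-crossing'' would add at least $\Delta+D_A-D_B$ to $\lambda(A+B)$), and the concatenation $I_2\cup I_3\cup(D_A+I_1)\cup(D_A+I_2)$ is an interval of length $\ge\lambda(A)+\lambda(B)$ inside $A+B$. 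Ironically, the density-function picture you invoke only as bookkeeping is precisely the engine of the paper's proof.
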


Beyond the result themselves, what is striking is that the proof in the continuous setting is much easier to understand than in the discrete setting. The first two statements under hypothesis (i) are a straightforward application of Ruzsa's results in \cite{Ruz}. This nice paper of Ruzsa seems to have been overlooked whereas his ideas may lead to further results in the continuous setting that may even yield some improvements in the discrete one. This part of the theorem has already partially been proved by M. Christ in  \cite{Cr} (for $A=B$). \\
In \cite{Ruz}, Ruzsa improved on the well-known lower bound 
$$\lambda(A+B)\geq \lambda(A)+\lambda(B)$$
and proves that, if $\lambda(A)\leq \lambda(B)$, this can be replaced by 
$$\lambda(A+B)\geq \lambda(A)+\min(\rm{diam}(B),\lambda(A)+\lambda(B)).$$
The main idea of his proof is to transfer the sum in $\R$ in a sum in $\Tor=\R/\Z$. 
Ruzsa considers the sets $A$ and $B$ of real numbers as sets of numbers modulo $D_B$, the diameter of $B$. 
In this setting, he can use a rescaled version of Raikov's theorem \cite{Raikov} as well as the fact that if $x\in[0,D_B]$ belongs to $A$ and if $B$ is a closed set then $x, x+D_B\in A+B$.\\
Ruzsa's result directly yields the first two statements of our theorem under condition (i). To get these statements from condition (ii), we need to use Ruzsa's arguments in a slightly different way. This part is the continuous analogue of Freiman's $3k-4$ theorem in \cite{Freiman1959} as generalised to the sum of two distinct sets by Freiman  \cite{Freiman1962}, Lev and Smeliansky \cite{Lev_Smeliansky} and Stanchescu \cite{stanchescu}.\\
As far as we know, the third consequence (the existence of an interval $I$ of length at least $\lambda(A)+\lambda(B)$ included in $A+B$) was not known in the continuous setting. The discrete analogue of our third statement was proved by Freiman in \cite{Freiman2009IAP11} in the special case $A=B$. It has been generalised to the case $A\not=B$ by Bardaji and Grynkiewicz's theorem in \cite{Bardaji_Grynkiewicz}. An exposition of these results can be found in \cite{Gr}, chapter 7. We could adapt their proof but a simpler proof follows from some density arguments in the continuous setting. Actually, we think that the ideas are more natural in the continuous setting where a graphic illustration leads to the result. We hope that this sheds some new light on inverse results for integers too. \\
This third statement is a consequence of the simple remark that if the sum of the densities of $A$ and $B$ on $[0,x]$ is strictly larger than $x$, then $x$ can be written as a sum of an element in $A$ and an element in $B$ and of a symmetric result. This allows us to partition $[0,D_A]$ into three sets : a subset $Z_1$ of $A+B$, a subset $Z_3$ of $A+B-D_A$ and their complementary set $Z_2$ included in both $A+B$ and $A+B-D_A$. To go from $Z_1$ to $Z_3$ and reciprocally, one need to cross $Z_2$. The proof relies on the fact that there is only one such crossing under the hypothesis and on a lower bound for the measure of $Z_2$. 

\bigskip

The graphic interpretation leads to a relaxed inverse Freiman theorem for sets of large density with small sumset. Namely, we prove the following result.
\begin{thm}\label{intro_relaxed_3k-4}
Let $A$ and $B$ be measurable bounded subsets of $\mathbb{R}$ such that $D_B:=\rm{diam}(B)\leq D_A:=\rm{diam}(A)$ and $\Delta:=\lambda(A)+\lambda(B)-D_A>0$. Let $m$ be a non negative integer.
If $$\lambda(A+B)< D_A+\lambda(B)+(m+1)(D_A-D_B+\Delta)$$ then the sum $A+B$ contains a union of at most $m$ disjoint intervals $K_1, K_2, \cdots K_n$ ($n\leq m$), each of length at least $2\Delta+D_A-D_B$ such that the measure of this union of intervals is at least $D_A+(2n+1)\Delta$.
\end{thm}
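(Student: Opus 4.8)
After a translation we may assume $\inf A=\inf B=0$, so $A\subseteq[0,D_A]$, $B\subseteq[0,D_B]$ with $D_B\le D_A$, and $C:=A+B\subseteq[0,D_A+D_B]$. Everything rests on the elementary \emph{density principle}: two measurable subsets of an interval $[0,\ell]$ whose measures add up to more than $\ell$ must meet. Applied to $A\cap[0,x]$ and $x-(B\cap[0,x])$ it gives, with $u(x):=\lambda(A\cap[0,x])$ and $v(x):=\lambda(B\cap[0,x])$ (extended by $v\equiv\lambda(B)$ on $[D_B,D_A]$), that $u(x)+v(x)>x\Rightarrow x\in C$ for $x\in[0,D_A]$; applied to $A\cap[x+D_A-D_B,D_A]$ and $(x+D_A)-(B\cap[x,D_B])$ it gives $\lambda(A\cap[x+D_A-D_B,D_A])+\lambda(B\cap[x,D_B])>D_B-x\Rightarrow x+D_A\in C$ for $x\in[0,D_B]$. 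I would package this into the two $1$-Lipschitz functions
\[\phi(x)=u(x)+v(x)-x\quad(x\in[0,D_A]),\qquad \psi(x)=\Delta+D_A-D_B+x-u(x+D_A-D_B)-v(x)\quad(x\in[0,D_B]),\]
so that $\phi(x)>0\Rightarrow x\in C$ and $\psi(x)>0\Rightarrow x+D_A\in C$, and record: (a) $\phi\ge\Delta$ on $[D_B,D_A]$ (because $u(x)\ge\lambda(A)-(D_A-x)$ there), hence $[D_B,D_A]\subseteq C$; (b) $\phi(x)+\psi(x)=\Delta+\lambda\bigl(A\cap[x,x+D_A-D_B]\bigr)$ wait — $=\Delta+D_A-D_B-\lambda(A\cap[x,x+D_A-D_B])$, so $\Delta\le\phi+\psi\le\Delta+D_A-D_B$ on $[0,D_B]$; (c) $\phi(0)=\psi(D_B)=0$, $\psi(0)\ge\Delta$, $\phi(D_A)=\Delta$.

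From (a)--(b), $\{\phi>0\}\cup\{\psi>0\}=[0,D_A]$, while $\{\phi>0\}\subseteq[0,D_A]$ and $\{\psi>0\}+D_A\subseteq[D_A,D_A+D_B]$ are essentially disjoint and both contained in $C$; therefore
\[\lambda(C)\ \ge\ \lambda(\{\phi>0\})+\lambda(\{\psi>0\})\ =\ D_A+\lambda(W),\qquad W:=\{x\in[0,D_B]:\phi(x)>0,\ \psi(x)>0\}.\]
I would then pass to the single continuous function $\theta:=\phi-\psi$ on $[0,D_B]$, extended by $\theta:=\phi$ on $[D_B,D_A]$, which has $|\theta'|\le2$, $\theta(0)\le-\Delta$, and $\theta\ge\Delta$ on $[D_B,D_A]$. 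The point is that $\{|\theta|<\Delta\}\subseteq W$ (from $\phi+\psi\ge\Delta$ and $|\phi-\psi|<\Delta$ one gets $\phi>0$ and $\psi>0$). The disjoint closed sets $\{\theta\le-\Delta\}$ (containing $0$) and $\{\theta\ge\Delta\}$ (containing $[D_B,D_A]$) have components $X_1<\dots<X_r$ and $Y_1<\dots<Y_s$; the $r+s-1$ complementary ``bands'' are intervals on which $|\theta|<\Delta$, and since $\theta$ must pass from $\le-\Delta$ to $\ge\Delta$, an odd number $2n+1$ of them are \emph{transversal} (join a component of one type to one of the other). Each transversal band has length $\ge\Delta$ (on it $\theta$ sweeps length $2\Delta$ at speed $\le2$), whence $\lambda(W)\ge(2n+1)\Delta$ and $\lambda(C)\ge D_A+(2n+1)\Delta$.

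Next I would exhibit the intervals. Since $\theta>-\Delta\Rightarrow\phi>0\Rightarrow x\in C$ on $[0,D_A]$ and $\theta<\Delta\Rightarrow\psi>0\Rightarrow x+D_A\in C$ on $[0,D_B]$, the set
\[C\ \supseteq\ \Bigl([0,D_A]\setminus\textstyle\bigcup_jX_j\Bigr)\ \cup\ \Bigl(\bigl([0,D_B]\setminus\bigcup_i(Y_i\cap[0,D_B])\bigr)+D_A\Bigr)\]
is a disjoint union of $r+s-1$ intervals of total measure exactly $D_A+\lambda(W)$, the last one in $[0,D_A]$ and the first one in $[D_A,D_A+D_B]$ gluing across $D_A$ into a single ``straddling'' interval that contains $[D_B,D_A]$. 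One checks that the straddling interval has length $\ge2\Delta+D_A-D_B$: on its $[0,D_A]$-side it contains $[D_B,D_A]$ (length $D_A-D_B$) together with the transversal band emanating from $\sup X_r$ (length $\ge\Delta$, disjoint from $[D_B,D_A]$), and on its $[D_A,D_A+D_B]$-side it contains the $D_A$-translate of a stretch of length $\ge\Delta$ (namely $[0,\inf Y_1)$, which contains the transversal band feeding into $Y_1$). The $K_i$ are then obtained by keeping the components of length $\ge2\Delta+D_A-D_B$; the argument concludes by checking that there are at most $m$ of them and that their total measure is still $\ge D_A+(2n+1)\Delta$, the bound $n\le m$ being extracted from the displayed lower estimate on $\lambda(C)$ and the hypothesis $\lambda(C)<D_A+\lambda(B)+(m+1)(D_A-D_B+\Delta)$ via the elementary inequalities $\lambda(B)\ge\Delta$ and $D_A-D_B+\Delta\ge\Delta$.

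\textbf{Main obstacle.} Everything up to $\lambda(C)\ge D_A+(2n+1)\Delta$ is clean; the delicate part is the last paragraph. One must pair the \emph{number} of long components with the integer $n$ appearing in the measure bound, show that the short, discarded components (the ``one‑band'' and ``single‑peak'' excursions, each lying inside $W$) carry little enough measure, and — most subtly — verify the $D_A-D_B$ term in the length requirement $2\Delta+D_A-D_B$, which for components other than the straddling one has to be recovered by combining a transversal band on each side with the structural inequality $\phi+\psi\le\Delta+D_A-D_B$. I also expect the genuinely degenerate case $m=0$ to need a separate sentence: there the displayed hypothesis forces $2n+1=1$, i.e.\ $n=0$, and the statement should then be read as the assertion of Theorem~\ref{intro_3k-4}(3), namely that $A+B$ still contains one interval of length $\ge\lambda(A)+\lambda(B)$ (the straddling interval above, whose length is $\ge2\Delta+D_A-D_B\ge\lambda(A)+\lambda(B)-$ the excess, so that the lower bound $D_A+\Delta$ on its measure holds).
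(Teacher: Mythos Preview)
Your framework is exactly the paper's: your $\phi,\psi$ are $g(x)-x$ and $(x+D_A-D_B+\Delta)-h(x)$, your $W$ is the paper's $Z_2$, and your ``transversal bands'' are the paper's up/down crossings $I_k^{(2)}$ and $J_k$. The part you label ``clean'' is indeed fine and matches the paper.

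The gap is precisely where you suspect it. Your lower bound
\[
\lambda(C)\ \ge\ D_A+\lambda(W)\ \ge\ D_A+(2n+1)\Delta
\]
is \emph{strictly weaker} than what the paper uses, and it does \emph{not} yield $n\le m$. Combining it with the hypothesis gives only $(2n+1)\Delta<\lambda(B)+(m+1)(\Delta+D_A-D_B)$; if $D_A=D_B$ and $\Delta$ is small compared to $\lambda(B)$, this allows $n$ of order $\lambda(B)/\Delta\gg m$. The ``elementary inequalities $\lambda(B)\ge\Delta$ and $D_A-D_B+\Delta\ge\Delta$'' go the wrong way here --- they lower-bound the right side, when you would need an upper bound.

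What you are missing is one extra observation the paper exploits (inherited from the proof of Theorem~\ref{intro_3k-4}): since $0,D_A\in A$, the whole set $B$ sits inside $\{x:x,x+D_A\in C\}$, so in fact
\[
\lambda(C)\ \ge\ D_A+\lambda(B)+\lambda(W\cap B^c).
\]
Then one analyses the down crossings (your bands going from $\{\theta\ge\Delta\}$ to $\{\theta\le-\Delta\}$) more sharply than ``length $\ge\Delta$'': if such a band is $[y,x]$, one has $h(y)\ge y+D_A-D_B+\Delta$ and $g(x)\le x$, and subtracting forces $\lambda(B^c\cap[y,x])\ge \Delta+D_A-D_B$ (this is inequality~(\ref{bc}) in the paper). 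With $n$ down crossings this gives
\[
\lambda(C)\ \ge\ D_A+\lambda(B)+n(\Delta+D_A-D_B),
\]
which against the hypothesis immediately yields $n\le m$. The same computation is what supplies the $D_A-D_B$ contribution to the length of the non-straddling intervals that you could not recover from $\phi+\psi\le\Delta+D_A-D_B$ alone: each of the $2n$ non-straddling intervals in $C$ is (up crossing)$\,\cup\,I_k^{(j)}\,\cup\,$(down crossing), hence of length $\ge\Delta+(\Delta+D_A-D_B)=2\Delta+D_A-D_B$, while the straddling one picks up $D_A-D_B$ from $[D_B,D_A]$ as you already noted. So the single missing ingredient --- the $B\subseteq\{x:x,x+D_A\in C\}$ trick and the resulting $B^c$--estimate on down crossings --- closes both of your acknowledged obstacles at once.
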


With this weak hypothesis, a description of the sets $A$ and $B$ can be given. This is netherveless a rather vague description. On the contrary, we get a precise description of sets $A$ and $B$ for which the lower bound for the measure of $A+B$ is attained. 
\begin{thm}\label{intro_equalarge}
Let $A$ and $B$ be some bounded sets of real numbers such that $D_B\leq D_A$ and $\lambda(A+B)=D_B+\lambda(A)< \lambda(A)+2\lambda(B)$. Write $A'=A-\inf(A)$ and $B'=B-\inf(B)$.
Then there exists two positive real numbers $b$ and $c$ such that $b,c\leq D_B$, the interval $I=(b,D_A-c)$ has size at least $\lambda(A)+\lambda(B)-D_B=\Delta+D_A-D_B$ and
\begin{itemize}
\item $A_1=A'\cap[Õ0, b]$ is $B'$-stable, i.e. $A'\cap [0,b]=(A'+B')\cap[0,b]$;
\item $A_2=D_A-(A'\cap[D_A-c, D_A])$ is $B'$-stable;
\item $I=(b,D_A-c)\subset A'$;
\item $(b,D_A+D_B-c)\subset A'+B'$
 \end{itemize}
 up to a set of measure $0$.
The sets $A'$ and $A'+B'$ may each be partitioned into three parts as follows
$$A'=A_1\cup I\cup (D_A-A_2), \quad 
A'+B'=A_1\cup(b,D_A+D_B-c)\cup(D_A+D_B-A_2)$$
 Furthermore $B'$ is a disjoint union of three sets $B=B_1\cup B_I\cup (D_B-B_2)$ with 
 \begin{itemize}
\item $B_1\subset A_1$, $B_2\subset A_2$, $B_I\subset(b,D_B-c)$;
\item $\lambda(B_1\cap[0,x])\leq x^2/c_1$ for $x\leq c_1=\sup\{x\, :\, \lambda(A'\cap[0,x])\leq \frac12x\}$ 
\item $\lambda(B_2\cap[0,x])\leq x^2/c_2$ for $x\leq c_2=D_A-\sup\{x\, :\, \lambda(A'\cap(D_A-x,D_A))\leq \frac12x\}$
 \end{itemize}
 up to a set of measure $0$.
\end{thm}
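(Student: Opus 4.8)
The plan is to read off the structure from the way $A+B$ and its translate $A+B-D_A$ cover $[0,D_A]$. After translating, assume $\inf A=\inf B=0$, so $A=A'\subseteq[0,D_A]$, $B=B'\subseteq[0,D_B]$, $A+B\subseteq[0,D_A+D_B]$; by the standard inner-measure reduction (as in \cite{Ruz}) one may take $A,B$ compact with $0,D_A\in A$ and $0,D_B\in B$. Put $\alpha(x)=\lambda(A\cap[0,x])$, $\beta(x)=\lambda(B\cap[0,x])$, $\bar\alpha(x)=\lambda(A\cap[D_A-x,D_A])$, $\bar\beta(x)=\lambda(B\cap[D_B-x,D_B])$ (all nondecreasing and $1$-Lipschitz), and use throughout the density remark from the introduction together with its mirror: $\alpha(x)+\beta(x)>x\Rightarrow x\in A+B$, and $\bar\alpha(x)+\bar\beta(x)>x\Rightarrow D_A+D_B-x\in A+B$. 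Write $\Delta=\lambda(A)+\lambda(B)-D_A$ and $\gamma=D_A-\lambda(A)$, the latter being the measure of $[0,D_A+D_B]\setminus(A+B)$. The hypotheses put us in case (ii) of Theorem~\ref{intro_3k-4}, so its conclusion~(1) gives $\lambda(A+B)\ge D_A+\lambda(B)$; combined with $\lambda(A+B)=D_B+\lambda(A)$ this yields $\gamma\le D_B-\lambda(B)$, whence $\gamma<\tfrac12 D_B$ (because $D_B<2\lambda(B)$) and $\Delta=\lambda(B)-\gamma>0$.

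First I would prove $[0,D_A]\subseteq(A+B)\cup(A+B-D_A)$. With $\phi(x)=\alpha(x)+\beta(x)-x$ and $\chi(x)=\bar\alpha(D_B-x)+\bar\beta(D_B-x)-(D_B-x)$ one has $\phi(x)>0\Rightarrow x\in A+B$, $\chi(x)>0\Rightarrow x+D_A\in A+B$, and the identity $\phi(x)+\chi(x)=\Delta+(D_A-D_B)-\lambda\bigl(A\cap[x,x+D_A-D_B]\bigr)$ gives $\phi(x)+\chi(x)\ge\Delta>0$ on $[0,D_B]$; for $x\in(D_B,D_A]$ one checks directly that $x\notin A+B$ would force $\alpha(x-D_B)\ge\Delta+(x-D_B)$, which is impossible. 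So, up to a null set, $[0,D_A]=Z_1\sqcup Z_2\sqcup Z_3$ with $Z_1=\{x\in A+B:x+D_A\notin A+B\}$, $Z_2=\{x\in A+B:x+D_A\in A+B\}$, $Z_3=\{x\notin A+B:x+D_A\in A+B\}$; moreover $Z_2\cup Z_3\subseteq[0,D_B]$ and $(D_B,D_A]\subseteq Z_1$. Comparing $(A+B)\cap[0,D_A]=Z_1\cup Z_2$ with $(A+B)\cap[D_A,D_A+D_B]=D_A+(Z_2\cup Z_3)$ yields $\lambda(A+B)=\lambda(Z_1)+2\lambda(Z_2)+\lambda(Z_3)=D_A+\lambda(Z_2)$, so $\lambda(Z_2)=D_B-\gamma$ and $\gamma=\lambda(Z_3)+\lambda(Z_1\cap[0,D_B])$. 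A short computation then shows: if $y\in Z_1\cap[0,D_B]$ then $\chi(y)\le0$, hence $\phi(y)\ge\Delta$ and $y\ge\phi(y)\ge\Delta$; if $x\in Z_3$ then $\phi(x)\le0$, hence $\chi(x)\ge\Delta$, and $\chi(x)\le D_B-x$ gives $x\le D_B-\Delta$. Thus $Z_1\cap[0,D_B]\subseteq[\Delta,D_B]$ and $Z_3\subseteq[0,D_B-\Delta]$.

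The crux is the one-crossing statement: $Z_3$ lies entirely to the left of $Z_1\cap[0,D_B]$, that is, $b:=\sup Z_3\le q:=\inf(Z_1\cap[0,D_B])$. I expect proving this to be the main obstacle; it is the step that really uses the extremal equality $\lambda(A+B)=D_B+\lambda(A)$. The mechanism I have in mind: a ``double crossing'' $y_1<x_1<y_2$ with $y_1,y_2\in Z_1\cap[0,D_B]$ and $x_1\in Z_3$ forces $\phi$ to drop from $\ge\Delta$ at $y_1$ to $\le0$ at $x_1$ and climb back to $\ge\Delta$ at $y_2$, which by $1$-Lipschitzness spends at least $2\Delta$ of length and, reading off $\int(\alpha'+\beta')$ on the descent, creates a deficit of $A$-and-$B$-mass that must be absorbed into the gaps of $A+B$; summing such deficits over all crossings should force $\gamma$ above its ceiling $D_B-\lambda(B)$, contradicting Theorem~\ref{intro_3k-4}(1). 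Making this accounting precise is the technical heart of the argument. Once $b\le q$ is known, the quantitative part is immediate: $(b,q)\subseteq Z_2$ automatically; by continuity $\chi(b)\ge\Delta$ (limit of $Z_3$-points) and $\chi(q)\le0$ (limit of $Z_1$-points), so $1$-Lipschitzness of $\chi$ gives $q-b\ge\chi(b)-\chi(q)\ge\Delta$. Setting $c:=D_B-q$ we get $b,c\le D_B$, $b+c=D_B-(q-b)\le D_B-\Delta$, the interval $I=(b,D_A-c)$ of length $D_A-b-c\ge\Delta+D_A-D_B$, and $(b,D_A+D_B-c)\subseteq A+B$.

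The rest is bookkeeping. Write $\ell_1=\lambda((A+B)\cap[0,b])$ and $\ell_3=\lambda((A+B)\cap[D_A+D_B-c,D_A+D_B])$; since $(b,D_A+D_B-c)\subseteq A+B$, splitting into the three disjoint pieces gives $\ell_1+\ell_3=\lambda(A+B)-(D_A+D_B-b-c)=b+c-\gamma$. On the other hand $A\subseteq A+B$ and $A+D_B\subseteq A+B$ give $\ell_1\ge\lambda(A\cap[0,b])$ and $\ell_3\ge\lambda(A\cap[D_A-c,D_A])$, while $\lambda(A\cap[0,b])+\lambda(A\cap[D_A-c,D_A])=\lambda(A)-\lambda(A\cap I)\ge\lambda(A)-(D_A-b-c)=b+c-\gamma$. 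Hence $\lambda(A\cap[0,b])+\lambda(A\cap[D_A-c,D_A])\ge b+c-\gamma=\ell_1+\ell_3\ge\lambda(A\cap[0,b])+\lambda(A\cap[D_A-c,D_A])$ is a chain of equalities, which forces at once $\lambda(A\cap I)=\lambda(I)$ (so $I\subseteq A$ up to a null set), $\ell_1=\lambda(A\cap[0,b])$ (so, as $A\cap[0,b]\subseteq(A+B)\cap[0,b]$, the set $A_1:=A\cap[0,b]$ is $B$-stable), and, by reflection, that $A_2:=D_A-(A\cap[D_A-c,D_A])$ is $B$-stable. Since $0\in B$, any $y\in B$ with $y\le b$ satisfies $y\in(A+B)\cap[0,b]=A_1$, so $B_1:=B\cap[0,b]\subseteq A_1$, and symmetrically $B_2:=D_B-(B\cap[D_B-c,D_B])\subseteq A_2$. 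Collecting the three pieces yields the displayed decompositions of $A$, of $A+B$, and of $B=B_1\cup B_I\cup(D_B-B_2)$ with $B_I=B\cap(b,D_B-c)$. Finally, since $\lambda(A)=D_A-\gamma>\tfrac12 D_A$ one has $c_1<D_A$ and $\alpha(c_1)=\tfrac12 c_1$, and the quadratic estimate $\lambda(B_1\cap[0,x])\le x^2/c_1$ should follow by iterating the stability $A_1+nB_1\subseteq A_1$ (inside $[0,b]$) and comparing, via one-dimensional Brunn--Minkowski, with the interval $[0,c_1]$ on which $A$ has density exactly $\tfrac12$; the estimate for $B_2$ in terms of $c_2$ is symmetric. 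The place I expect to need the most care, besides the one-crossing statement itself, is this last quadratic bound, where the rigidity of the extremal configuration has to be converted into a genuine decay estimate on $B_1$.
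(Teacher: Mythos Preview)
Your structural part (the decomposition of $A'$, $A'+B'$, and $B'$) is essentially the paper's argument, and the bookkeeping you give is correct. Two remarks, one minor and one substantial.

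\textbf{One-crossing.} You are working harder than necessary. Your zones $Z_i$ are defined via membership in $A+B$, whereas the paper defines them via the density functions $g(x)=\alpha(x)+\beta(x)$ and $h(x)=\alpha(x+D_A-D_B)+\beta(x)$ (using your notation). With those density-based zones the one-crossing is \emph{already} proved in Theorem~\ref{intro_3k-4}, whose hypothesis (ii) is satisfied here; the paper simply quotes that proof and takes $b$ with $g(b)=b$ and $D_B-c$ with $h(D_B-c)=D_B-c+\Delta$. Your membership-based crossing is a different and strictly harder statement, and the accounting you sketch is not needed.

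\textbf{The quadratic bound.} This is a genuine gap. One-dimensional Brunn--Minkowski gives only $\lambda(A_{(n+1)u})\ge\lambda(A_{nu})+\lambda(B_u)$, hence $\lambda(A_{Nu})\ge N\lambda(B_u)$; combined with $\lambda(A_{Nu})\le\tfrac12 Nu$ this yields only $\lambda(B_u)\le u/2$, not $\lambda(B_u)\le u^2/c_1$. The paper instead applies Ruzsa's refined lower bound (Theorem~\ref{intro_cor_ruzsa}): writing $\lambda(A_{nu})/\lambda(B_u)=\tfrac{k_n(k_n-1)}{2}+k_n\delta_n$ and $u_n=k_n+\delta_n$, stability gives $B_u+A_{nu}\subset A_{(n+1)u}$ (valid for $(n+1)u<D_A-c$, not merely in $[0,b]$), and Ruzsa's bound yields $\lambda(A_{(n+1)u})\ge\lambda(A_{nu})+u_n\lambda(B_u)$, hence $u_{n+1}\ge u_n+1$. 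This is the quadratic growth: $\lambda(A_{Nu})\ge\tfrac12 N(N+1)\lambda(B_u)$, which together with $\lambda(A_{Nu})\le\tfrac12 Nu$ gives $\lambda(B_u)\le u/(N+1)\le u^2/c_1$ for $N=\lfloor c_1/u\rfloor$. Your sketch does not contain this mechanism, and Brunn--Minkowski alone cannot produce it.
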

We also prove a similar result under the hypothesis $\lambda(A+B)=\lambda(B)+D_A$. Only the result on the density of the sets near the border of the interval requires some real new work. The other results are a consequence of the previous observations on graph functions. To study the densities, one uses Ruzsa's lower bound for the sum $A+B$ in terms of the ratio $\lambda(A)/\lambda(B)$. Precisely, Ruzsa proved the following theorem \cite{Ruz}
 \begin{thm}[Ruzsa]\label{intro_cor_ruzsa}
Let $A$ and $B$ be bounded subsets of $\mathbb{R}$ such that $\lambda(B)\not=0$. Write $D_B={\rm diam}(B)$ and define $K\in\N^*$ and $\delta\in \R$ such that
\begin{equation}\label{defKdelta}
\frac{\lambda(A)}{\lambda(B)} =\frac{K(K-1)}{2}+K\delta, \quad 0\leq\delta<1.
\end{equation}
Then we have
\begin{equation*}
 \lambda(A+B)\geq \lambda(A)+\min({\rm diam}(B),(K+\delta)\lambda(B)).
 \end{equation*}
 \end{thm}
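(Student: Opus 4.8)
The plan is to follow Ruzsa's strategy: transfer the sum from $\R$ to the circle $\Tor_{D_B}:=\R/D_B\Z$ and invoke there the rescaled Raikov inequality $\lambda\big((X+Y)\bmod D_B\big)\geq\min\big(D_B,\lambda(X)+\lambda(Y)\big)$, exploiting throughout the elementary remark that if $B$ is closed with $\inf B=0$ and $\sup B=D_B$ then $0,D_B\in B$, so that $A+B\supseteq A\cup(A+D_B)$ and, more generally, a ``stack'' of $A$ of height $h$ above a point $t$ survives, translated by any $b\in B$, as a height-$h$ stack of $A+B$ above $t+b$. By dilation invariance we may assume $D_B=1$; by inner regularity $\lambda(A+B)=\sup\{\lambda(K_A+K_B)\}$ over compact $K_A\subseteq A,\ K_B\subseteq B$, and we may take $K_B$ with $\lambda(K_B)$ close to $\lambda(B)$ and ${\rm diam}(K_B)$ close to $1$, so by continuity of the right-hand side in $\big(\lambda(A),\lambda(B),D_B\big)$ it suffices to treat compact sets; translating, assume $\inf A=\inf B=0$ and $\sup B=1$, hence $0,1\in B$.

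Next I would carry out the transfer. Let $\pi\colon\R\to\Tor:=\R/\Z$, $\widetilde A=\pi(A)$, $\widetilde B=\pi(B)$, and let $\mu$ be Haar measure on $\Tor$ of total mass $1$, so $\mu(\widetilde B)=\lambda(B)=:\beta$ and $\mu(\widetilde A)\leq\lambda(A)=:\alpha$. Slice $A$ along the integer lattice: $f(t):=\#\big(A\cap(t+\Z)\big)$ has $\int_{[0,1)}f=\alpha$, $\{f\geq1\}=\widetilde A$, and the level sets $F_j:=\{f\geq j\}$ are decreasing with $\sum_{j\geq1}\mu(F_j)=\alpha$. Counting the fibres of $A+B$ above $\Tor$ and using $0,1\in B$ to gain one extra fibre above every stack of $A$ yields the transfer inequality
\[
\lambda(A+B)\;=\;\int_{[0,1)}\#\big((A+B)\cap(s+\Z)\big)\,ds\;\geq\;\lambda(A)+\mu\big(\widetilde A+\widetilde B\big),
\]
addition being taken in $\Tor$; a companion estimate, obtained by transporting each stack through all of $\widetilde B$, is $\lambda(A+B)\geq\sum_{j\geq1}\mu(F_j+\widetilde B)$. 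If $\mu(\widetilde A)+\beta\geq1$ then $\widetilde A+\widetilde B=\Tor$ by Raikov, so $\lambda(A+B)\geq\lambda(A)+1=\lambda(A)+D_B\geq\lambda(A)+\min\big(D_B,(K+\delta)\beta\big)$; hence from now on assume $\mu(\widetilde A)+\beta<1$, so that $A$ is ``thin modulo $1$'' and it remains to show $\lambda(A+B)\geq\lambda(A)+(K+\delta)\beta$.

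For this I would argue by induction on $K$, the relation $\alpha/\beta=\tfrac{K(K-1)}2+K\delta$ forcing $A$ to stack into at least $K$ nonempty levels. The base case $K=1$ is $\lambda(A)<\lambda(B)$: if $A$ is unfolded this follows from the transfer inequality and Raikov, while if $A$ is folded one recovers the apparent loss $\mu(\widetilde A)<\alpha$ from the higher levels, combining $\lambda(A+B)\geq\sum_{j\geq1}\mu(F_j+\widetilde B)$ with $\mu(F_j+\widetilde B)\geq\min(1,\mu(F_j)+\beta)$ and the inequalities $\alpha<\beta$, $\mu(\widetilde A)+\mu(F_2)\leq\alpha$. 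For the step from $K-1$ to $K$ one peels off from $A$ a carefully chosen subset of measure $\beta$ — morally one unit layer of the tallest stack, or a translate of $B$ sitting inside $A$ — leaving a set $A'$ with $\lambda(A')=\alpha-\beta$ whose relative measure lies in the $(K-1)$-range, and one shows
\[
\lambda(A+B)\;\geq\;\lambda(A'+B)+\big(\psi(\alpha)-\psi(\alpha-\beta)\big),\qquad \psi(\alpha):=(K+\delta)\beta ,
\]
where $\psi$ is the concave piecewise-linear threshold with slopes $1,\tfrac12,\tfrac13,\dots$ and $\psi\big(\tfrac{K(K-1)}2\beta\big)=K\beta$; the induction hypothesis applied to $A'$ then closes the loop. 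Reassembling the two branches proves the inequality for compact $A,B$, and the reductions give the general statement.

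The hard part is exactly this inductive step: converting the multi-layer structure of $A$ into genuinely new Lebesgue mass of $A+B$ with the \emph{exact} constant $(K+\delta)\beta$. The crude bounds $\lambda(A+B)\geq\lambda(A)+\mu(\widetilde A+\widetilde B)$ and $\lambda(A+B)\geq\sum_j\mu(F_j+\widetilde B)$ become too lossy once $A$ folds more than once, because they forget the horizontal extent of each stack (and removing a ``redundant'' top layer of a stack costs nothing, whereas $\psi(\alpha)-\psi(\alpha-\beta)>0$), so the induction must be set up so that the removed measure-$\beta$ layer is the \emph{right} one — costing no more than the $\psi$-increment — and the break-point identity $\psi\big(\tfrac{K(K-1)}2\beta\big)=K\beta$ must be matched on the nose. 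Pinning down which layer to remove and verifying the comparison above is the crux, and is essentially the content of Ruzsa's argument in \cite{Ruz}.
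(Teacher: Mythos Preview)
Your reductions and the level-set setup are correct and coincide with the paper's (and Ruzsa's) framework: one works modulo $D_B=1$, writes $F_j=\tilde A_j=\{t:\#(A\cap(t+\Z))\ge j\}$, and uses that $0,1\in B$ to get the ``extra layer'' inclusion $\tilde A_{k-1}\subset\tilde S_k$ alongside the Raikov-type inclusion $\tilde A_k+\tilde B\subset\tilde S_k$.  Your two displayed inequalities $\lambda(A+B)\ge\lambda(A)+\mu(\tilde A+\tilde B)$ and $\lambda(A+B)\ge\sum_j\mu(F_j+\tilde B)$ are both valid consequences of these inclusions.

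The gap is the inductive step.  First, it is not carried out: you identify it yourself as ``the hard part'' and defer to \cite{Ruz}.  Second, as described it does not work: removing a subset of measure $\beta$ from $A$ gives $\frac{\alpha-\beta}{\beta}=\frac{K(K-1)}2+K\delta-1$, which lies in the $(K-1)$-range only when $\delta<1/K$; for $\delta\ge 1/K$ you stay in the $K$-range and the induction does not descend.  Third, Ruzsa's argument is \emph{not} an induction on $K$ with layer-peeling, so pointing to \cite{Ruz} does not fill this particular hole.

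What the paper actually does (following Ruzsa) is cleaner and avoids induction altogether.  From the two level-wise inequalities $\mu(\tilde S_k)\ge\mu(\tilde A_{k-1})$ and $\mu(\tilde S_k)\ge\mu(\tilde A_k)+\mu(B)$ one takes, for each $1\le k\le K_A+1$, the convex combination with weights $\tfrac{k-1}{K_A}$ and $\tfrac{K_A+1-k}{K_A}$ and sums over $k$; the telescoping yields the closed-form lower bound (Lemma~\ref{ruzsa1})
\[
\lambda(A+B)\ \ge\ f(K_A),\qquad f(k):=\frac{k+1}{k}\lambda(A)+\frac{k+1}{2}\lambda(B),
\]
where $K_A=\max\{k:F_k\neq\emptyset\}$ is the maximal stack height.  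One then simply minimises $f$ over positive integers: $f(k+1)-f(k)=\tfrac{\beta}{2}-\tfrac{\alpha}{k(k+1)}$ changes sign exactly once, so $f$ is minimal at $k=K$ with $f(K)=\lambda(A)+(K+\delta)\lambda(B)$.  Hence either $\lambda(A+B)\ge\lambda(A)+D_B$ (if $\mu(\tilde S_1)=1$) or $\lambda(A+B)\ge f(K_A)\ge f(K)=\lambda(A)+(K+\delta)\lambda(B)$.  No peeling, no induction: the whole content is the choice of weights and the one-variable minimisation.
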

A simple remark yields an improvement of this lower bound when $\rm{diam}(A)/\rm{diam}(B)\leq K$ and a partial result on sets $B$ such that $\lambda(A+B)<\lambda(A)+(K+\delta)\lambda(B)$.
The extremal sets in this context can also be described, this time in a very precise way. 
\begin{thm}\label{small_equa}
Let $A$ and $B$ be measurable bounded subsets of $\mathbb{R}$ such that $\lambda(A),\lambda(B)\not=0$. 
Let $K\in\N$ and $\delta\in[0,1)$ be such that 
$$\frac{\lambda(A)}{\lambda(B)} =\frac{K(K-1)}{2}+K\delta \quad\mbox{ and }\quad \lambda(A+B)=\lambda(A)+(K+\delta)\lambda(B)<\lambda(A)+ D_B$$
where $D_B={\rm diam}(B)$.
Then $A$ and $B$ are translates of sets $A'$ and $B'$ of the form
$$B'=[0,b_1]\cup[D_B-b_2, D_B]; \quad A'=\bigcup_{k=1}^K \left[(k-1)(D_B-b_2),(k-1)D_B+(K-k)b_1+\delta b\right]$$
with $b_1,b_2\geq 0$ and $b_1+b_2=b=\lambda(B)$.
\end{thm}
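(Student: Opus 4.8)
The plan is to extract the classification directly from the proof of Ruzsa's bound (Theorem~\ref{intro_cor_ruzsa}) and from the structural statement for sets with $\lambda(A+B)<\lambda(A)+(K+\delta)\lambda(B)$ that precedes it: our hypothesis is exactly the equality case, so the rigidity should come from saturating every inequality used there. After the standard reduction we may take $A$ and $B$ compact; translating, arrange $B\subseteq[0,D_B]$ with $\{0,D_B\}\subseteq B$ and $\inf A=0$, and set $b=\lambda(B)$. The hypothesis then reads $\lambda(A+B)=\lambda(A)+(K+\delta)b$, which is the minimum permitted by Theorem~\ref{intro_cor_ruzsa}, together with $(K+\delta)b<D_B$; the latter puts us in the ``no wrap--around'' regime of Ruzsa's folding argument, where $A+B$ meets exactly $K+1$ consecutive translates of the fundamental domain $[0,D_B)$ of $\Tor_{D_B}:=\R/D_B\Z$. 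I recall that this argument encodes $A$ by its layer function $r_A$ on $\Tor_{D_B}$ (so that $\int_{\Tor_{D_B}}r_A=\lambda(A)$), uses $\{0,D_B\}\subseteq B$ to shift one band of layer mass upward at each addition of $B$, and finishes with the continuous Raikov/Kneser inequality on $\Tor_{D_B}$.

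First I would read off the shapes of $B$ and of $A$. The torus inequality is applied to (a translate of) $\pi(B)\subseteq\Tor_{D_B}$, and its equality case — Raikov's theorem on a torus, equivalently the elementary fact that $\lambda(X+Y)=\lambda(X)+\lambda(Y)$ for $X,Y\subseteq\R$ of positive measure forces both to be intervals up to null sets — forces $\pi(B)$ to be an arc about $0$, i.e.\ $B=[0,b_1]\cup[D_B-b_2,D_B]$ with $b_1+b_2=b$ and $b_1,b_2\ge0$ (two genuinely disjoint intervals when both are nonzero, since $b<D_B$). Saturating the same chain of inequalities forces, up to a null set, $A$ to be a disjoint union $I_1\cup\cdots\cup I_K$ of $K$ intervals carrying the $K$ layers of the folded picture, each being an interval by the equality case in $\R$ applied layer by layer. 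This is where I would use the preliminary structural result to cut down the bookkeeping rather than re-run Ruzsa's induction from the start.

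Next I would pin down the positions. With $A=I_1\cup\cdots\cup I_K$ and $B=[0,b_1]\cup[D_B-b_2,D_B]$ explicit, $A+B=(A+[0,b_1])\cup(A+[D_B-b_2,D_B])$, and $\lambda(A+B)$ can be written out from the endpoints of the $I_k$ and from $b_1,b_2$, using the one--interval identity $\lambda([0,\alpha]+B)=D_B+\alpha-\sum_{g}\max(0,|g|-\alpha)$ (the sum over the gaps $g$ of $B$) and additivity over the bands. Imposing $\lambda(A+B)=\lambda(A)+(K+\delta)b$ and using the strict bound $(K+\delta)b<D_B$ forbids the image of any band of $A$ from overlapping the images of its neighbours, which is precisely the content of the tiling identities $I_{k+1}+[0,b_1]=I_k+[D_B-b_2,D_B]$ for $1\le k\le K-1$; solving these recursively, normalised by $\inf A=0$, yields
\[
I_k=\bigl[(k-1)(D_B-b_2),\ (k-1)D_B+(K-k)b_1+\delta b\bigr],\qquad 1\le k\le K,
\]
which is the asserted form of $A'$, together with $B'=[0,b_1]\cup[D_B-b_2,D_B]$, $b_1+b_2=\lambda(B)$.

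I expect this last step to be the main obstacle. That $A$ is a union of $K$ intervals and $B$ of two comes out quickly; extracting the exact interlocking of the $I_k$ means excluding every misaligned configuration with the same measures, and the only ingredient strong enough for that is the strict inequality $\lambda(A+B)<\lambda(A)+D_B$, which must be invoked at precisely the places where two band--images of $A+B$ would otherwise run into one another. What remains is bookkeeping: keeping the classification ``up to a set of measure zero'', treating the degenerate cases $b_1=0$, $b_2=0$, $\delta=0$ and $K=1$ (where some $I_k$ collapse, or $B$ is a single interval at one end — the $K=1$ case being just Ruzsa's original extremal bound $\lambda(A+B)=2\lambda(A)+\lambda(B)$), and justifying the compact--set reduction for an equality hypothesis by checking that compact exhaustions of $A$ and $B$ are asymptotically extremal (using continuity of $r\mapsto(K,\delta)$ away from the values $r=m(m+1)/2$).
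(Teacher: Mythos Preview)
Your overall strategy---saturate every inequality in Ruzsa's folding argument and read off the structure---is exactly the paper's. The gap is in your first step, and it is not bookkeeping.

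You assert that the equality case of Raikov/Kneser on $\Tor_{D_B}$ forces $\pi(B)$ to be a single arc, and you justify this by calling it ``equivalently the elementary fact that $\lambda(X+Y)=\lambda(X)+\lambda(Y)$ for $X,Y\subseteq\R$ \ldots\ forces both to be intervals''. That equivalence is false: on the circle, the equality $\mu(X+Y)=\mu(X)+\mu(Y)<1$ is also achieved when $X$ and $Y$ are each a union of $m$ equally spaced arcs (i.e.\ $mX$ and $mY$ are single arcs), for any $m\ge1$. Kneser's inverse theorem on $\Tor$ only gives you the existence of such an $m$ with $m\tilde A_k$ and $mB$ arcs; it does \emph{not} hand you $m=1$. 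The paper's proof spends its second and third steps---the bulk of the work---ruling out $m>1$ by a delicate analysis of how the layers $\tilde A_k$ and $\tilde S_k$ sit inside each coset of $\frac{1}{m}\Z/\Z$, tracking which ``slots'' $A$ actually occupies, showing these form $K$ consecutive integers, and only then deducing $m=1$ from a contradiction with $A+B_1\subset S$.

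Once $m=1$ is known, your endpoint computation in the second paragraph is essentially the paper's conclusion and would go through. But as written, your argument would equally ``prove'' the theorem for sets $B$ that are unions of $2m$ intervals of total length $b$ arranged $\frac{1}{m}$-periodically, with $A$ correspondingly periodic---and such configurations do saturate the Kneser bound on the torus layer by layer. What kills them is global: the lift back to $\R$ together with $A+B\subset\R$ (not $\Tor$) and the strict bound $(K+\delta)b<D_B$. You need to make that step explicit, and it is where the real difficulty lies.
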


Section 2 of this paper is devoted to the continuous $3k-4$ theorem. In section 3, we prove the relaxed inverse Freiman's theorem. We describe in section 4 the large sets for which the lower bound in Ruzsa's inequality is attained. Finally, the last section is devoted to the characterisation of the small sets for which the lower bound in Ruzsa's inequality is attained.

\section{Continuous Freiman $3k-4$ theorem}
In this paper, $\lambda$ will denote the inner Lebesgue measure on $\R$. Given a bounded set $S$ of real numbers, we define its diameter $D_S=\rm{diam}(S)=\sup(S)-\inf(S)$.  

\bigskip

In \cite{Ruz}, Ruzsa obtains some lower bounds for the measure of the sum $A+B$ of two subsets $A$ and $B$ of real numbers in terms of $\lambda(A)$, $\lambda(B)$ and $\rm{diam}(B)$. We state here one of his intermediate results. 

\begin{lem}[Ruzsa  \cite{Ruz}]\label{ruzsa1}
Let $A$ and $B$ be bounded subsets of $\mathbb{R}$. Write $D_B={\rm diam}(B)$.
Then we have either
\begin{equation}\label{min_sum1}
 \lambda(A+B)\geq \lambda(A)+{\rm diam}(B) 
 \end{equation}
 or 
 \begin{equation}\label{min_sum1}
 \lambda(A+B)\geq \frac{k+1}{k}\lambda(A)+\frac{k+1}{2}\lambda(B)
 \end{equation}
 with $k$ the positive integer defined by  
 $$k=\max\{k'\in\N \, : \, \exists x\in[0,D_B) \, : \, \#\{n\in\N \, : \, x+nD_B\in A\}\geq k'\}.$$
\end{lem}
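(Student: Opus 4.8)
The strategy is exactly Ruzsa's transfer from $\R$ to the torus $\Tor=\R/D_B\Z$, and I would follow it step by step. First, reduce to the case where $A$ and $B$ are compact (replace them by inner-regular compact subsets of almost full measure; all the quantities $\lambda(A)$, $\lambda(B)$, $\lambda(A+B)$ behave well under this, and $\mathrm{diam}(B)$ can only shrink, which only helps in hypothesis-type inequalities, so one takes a limit at the end). Also translate so that $\inf(B)=0$, hence $B\subset[0,D_B]$. Now push everything down to $\Tor$ via the quotient map $\pi$. Write $\bar A=\pi(A)$, $\bar B=\pi(B)$, and let $\mu$ be the Haar measure on $\Tor$ normalised so that $\mu(\Tor)=D_B$. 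The key elementary observation is the one the introduction highlights: since $B$ is compact and contained in a fundamental domain $[0,D_B]$, for every $x\in[0,D_B]$ lying in $A$ one has both $x\in A+B$ and $x+D_B\in A+B$ (using $\inf B, \sup B\in B$, i.e. $0,D_B\in B$ — if not literally, pass to the closure). More generally the fibre count $f(x)=\#\{n\in\Z: x+nD_B\in A\}$ controls how $\bar A+\bar B$ lifts back: a point of $\bar A+\bar B$ whose fibre meets $A+B$ in few points corresponds to small $f$, and the integer $k$ in the statement is precisely $\max_x f(x)$ over $x\in[0,D_B)$.

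Next, apply the appropriate Kneser/Raikov-type inequality on $\Tor$. Raikov's theorem (the continuous Cauchy–Davenport on the circle) gives $\mu(\bar A+\bar B)\ge\min(D_B,\ \mu_*(\bar A)+\mu_*(\bar B))$ where $\mu_*(\bar A)$ denotes the measure of the image, which may be strictly less than $\lambda(A)$ because of overlapping fibres. If $\mu(\bar A+\bar B)=D_B$, then $\bar A+\bar B=\Tor$ up to null sets, so every residue $x\in[0,D_B)$ is hit; lifting, every $x\in[0,D_B)$ and every $x+D_B$ that is ``reachable'' lies in $A+B$, and a counting argument over the fibres of $A$ (each fibre of size $f(x)\le k$ contributes at least $f(x)+1$ points of $A+B$ in the corresponding column, more precisely $A+B$ restricted to the column $x+\R D_B$ has measure-length at least $(f(x)+1)D_B$ worth... ) yields $\lambda(A+B)\ge \lambda(A)+D_B$, which is the first alternative \eqref{min_sum1}. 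If instead $\mu(\bar A+\bar B)<D_B$, then Raikov gives $\mu(\bar A+\bar B)\ge \mu_*(\bar A)+\mu_*(\bar B)$; here one must relate $\mu_*(\bar A)$ to $\lambda(A)$ and $k$. Since $A$ is covered by $\le k$ ``sheets'' over $[0,D_B)$, one has $\lambda(A)\le k\,\mu_*(\bar A)$, i.e. $\mu_*(\bar A)\ge \lambda(A)/k$; similarly $\mu_*(\bar B)\ge \lambda(B)$ (with equality since $B$ sits in one fundamental domain, $\mu_*(\bar B)=\lambda(B)$). Then lift $\bar A+\bar B$ back: a full column over each point of $\bar A+\bar B$ that also lies over $A$ contributes the extra $D_B$, and carefully accounting gives $\lambda(A+B)\ge \lambda(A)+\mu_*(\bar A+\bar B)\ge \lambda(A)+\lambda(A)/k+\lambda(B)=\frac{k+1}{k}\lambda(A)+\lambda(B)$. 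The factor $\frac{k+1}{2}$ on $\lambda(B)$ rather than $1$ comes from doing this more carefully sheet-by-sheet: restricting to the (disjoint) level sets of the fibre-count function $f$ and summing $\sum_{j\le k}(\text{contribution of the }j\text{-th sheet})$, which produces a telescoping/arithmetic-series gain of the form $\sum_{j=1}^{k}\big(\tfrac{\lambda(A)}{k}+\tfrac{j}{k}\lambda(B)\big)$-type, giving $\tfrac{k+1}{2}\lambda(B)$.

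The main obstacle is the bookkeeping in the lift-back step: precisely quantifying how much of $A+B$ sits above each residue class, given the multiplicity structure of $A$'s fibres, so as to extract the sharp constants $\frac{k+1}{k}$ and $\frac{k+1}{2}$ rather than the crude $\frac{k+1}{k}$ and $1$. I would organise this by writing $A=\bigsqcup_{j=1}^{k}A_j$ where $A_j$ is the set of residues $x\in[0,D_B)$ with $f(x)\ge j$ (so $A_j\subset A_{j-1}$ and $\lambda(A)=\sum_j \lambda(A_j)$ after identifying each $A_j$ with a subset of $[0,D_B)$), then showing that $A+B$ contains, over the column of $A_j$, an interval-structure of measure at least $\lambda(A_j)+j\lambda(B)$ (or the full column when an overflow occurs, which is the branch leading to \eqref{min_sum1}), and summing. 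Everything else — the compact reduction, the translation, the invocation of Raikov — is routine. Finally, undo the reductions by a limiting argument to recover the stated inequalities for the original measurable bounded $A$, $B$.
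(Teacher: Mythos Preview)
The paper does not prove Lemma~\ref{ruzsa1} directly after its statement (it is quoted from Ruzsa), but Ruzsa's argument is reproduced in the first step of the proof of Theorem~\ref{small_equa}, so one can compare against that.

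Your setup is right: reduce to $B\subset[0,D_B]$ closed with $0,D_B\in B$, project modulo $D_B$, and introduce the level sets $\tilde A_j=\{x\in[0,D_B):f(x)\ge j\}$ (your $A_j$) and similarly $\tilde S_j$ for $S=A+B$, so that $\lambda(A)=\sum_j\mu(\tilde A_j)$ and $\lambda(S)=\sum_j\mu(\tilde S_j)$. You also correctly isolate the two basic inequalities: the shift inclusion $\tilde A_{j-1}\subset\tilde S_j$ (from $0,D_B\in B$), and Raikov on the torus. Your case split is fine: the inequality $\lambda(S)\ge\lambda(A)+\mu(\tilde S_1)$ follows by summing the shift inclusions, and this already gives the first alternative when $\mu(\tilde S_1)=D_B$.

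The gap is in how you use Raikov. You apply it once, to $\bar A+\bar B=\tilde A_1+B$, and obtain only the crude bound $\lambda(S)\ge\frac{k+1}{k}\lambda(A)+\lambda(B)$. To get the sharp constant $\frac{k+1}{2}$ on $\lambda(B)$ you wave at a ``sheet-by-sheet'' argument, but the formula you write, of type $\sum_{j=1}^{k}\bigl(\lambda(A)/k+j\lambda(B)/k\bigr)$, gives $\lambda(A)+\frac{k+1}{2}\lambda(B)$ and \emph{loses} the factor $\frac{k+1}{k}$ on $\lambda(A)$; and the intermediate claim ``over the column of $A_j$, $A+B$ has measure at least $\lambda(A_j)+j\lambda(B)$'' is not justified (and is not how the constants arise). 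What is missing is the observation that Raikov applies to \emph{each} level: since $\tilde A_j+B\subset\tilde S_j\subset\tilde S_1$ and $\mu(\tilde S_1)<D_B$ in the second case, one has
\[
\mu(\tilde S_j)\ \ge\ \mu(\tilde A_j)+\mu(B)\qquad(1\le j\le k).
\]
Now combine this with the shift bound $\mu(\tilde S_j)\ge\mu(\tilde A_{j-1})$ using the specific convex weights $\frac{k+1-j}{k}$ and $\frac{j-1}{k}$, and sum over $1\le j\le k+1$. The coefficient of each $\mu(\tilde A_j)$ becomes $\frac{k+1-j}{k}+\frac{j}{k}=\frac{k+1}{k}$, while the $\mu(B)$-contribution is $\mu(B)\sum_{j=1}^{k+1}\frac{k+1-j}{k}=\frac{k+1}{2}\mu(B)$, which yields exactly $\lambda(S)\ge\frac{k+1}{k}\lambda(A)+\frac{k+1}{2}\lambda(B)$. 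This weighted combination is the step your plan does not contain, and without it the argument stalls at the weaker constants.
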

As a corollary, Ruzsa derives Theorem \ref{intro_cor_ruzsa}. In the following theorem, we improve this result for small sets $A$ and $B$ such that $D_A/D_B$ is small. This gives a partial answer to one of the questions asked by Ruzsa in \cite{Ruz}. Namely Ruzsa asked for a lower bound depending on the measures and the diameters of the two sets $A$ and $B$.
\begin{thm}\label{cor_ruzsa}
Let $A$ and $B$ be bounded subsets of $\mathbb{R}$ such that $\lambda(B)\not=0$. Write $D_B={\rm diam}(B)$, $D_A={\rm diam}(A)$ and define $K\in\N^*$ and $\delta\in \R$ as in \eqref{defKdelta}.
Then we have either
\begin{equation*}
 \lambda(A+B)\geq \lambda(A)+{\rm diam}(B) 
 \end{equation*}
 or 
 \begin{equation}\label{min_sum2}
 \lambda(A+B)\geq  \lambda(A)+(K+\delta)\lambda(B).
 \end{equation}
 Furthermore, if $D_A/D_B\leq K$, then 
 then \eqref{min_sum2} can be replaced by the better estimate
 $$\lambda(A+B)\geq\frac{\lceil{D_A/D_B}\rceil+1}{\lceil{D_A/D_B}\rceil}\lambda(A)+\frac{\lceil{D_A/D_B}\rceil+1}{2}\lambda(B).$$
 \end{thm}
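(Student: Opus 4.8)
The plan is to derive Theorem \ref{cor_ruzsa} as a consequence of Lemma \ref{ruzsa1}, distinguishing the three claimed estimates. First I would fix the integer $k$ defined in Lemma \ref{ruzsa1}, that is, the maximal number of points of $A$ lying in a single coset $x+D_B\Z$. The key elementary observation is that $A$ is contained in an interval of length $D_A$, so for any $x\in[0,D_B)$ the arithmetic progression $x, x+D_B, x+2D_B,\dots$ meets that interval in at most $\lceil D_A/D_B\rceil$ (or, more crudely, $\lfloor D_A/D_B\rfloor+1$) points; hence $k\leq \lceil D_A/D_B\rceil$. On the other hand, a sharper counting argument gives the lower bound that drives the main estimate: partitioning (the translate to $[0,D_B)$ of) $A$ into the fibers over $[0,D_B)$ and using $\lambda(A)=\int_0^{D_B}\#\{n:x+nD_B\in A\}\,dx\leq k D_B$ forces a relation between $k$ and the ratio $\lambda(A)/\lambda(B)$; combined with \eqref{defKdelta} this yields $k\geq$ the relevant quantity.

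Next I would run the dichotomy from Lemma \ref{ruzsa1}. In the first case \eqref{min_sum1} gives directly $\lambda(A+B)\geq\lambda(A)+D_B$, which is the first alternative of the theorem. In the second case we have $\lambda(A+B)\geq\frac{k+1}{k}\lambda(A)+\frac{k+1}{2}\lambda(B)$. The function $k\mapsto \frac{k+1}{k}\lambda(A)+\frac{k+1}{2}\lambda(B)$ is, for fixed $\lambda(A),\lambda(B)$, not monotone in an obvious way, so I would treat it carefully: writing $\lambda(A)=\lambda(B)\bigl(\frac{K(K-1)}{2}+K\delta\bigr)$, one checks by a direct computation that for the admissible values of $k$ the bound $\frac{k+1}{k}\lambda(A)+\frac{k+1}{2}\lambda(B)\geq \lambda(A)+(K+\delta)\lambda(B)$ holds; essentially this is exactly Ruzsa's passage from Lemma \ref{ruzsa1} to Theorem \ref{intro_cor_ruzsa}, which I would either cite or reproduce in two or three lines by minimizing over $k\geq K$ (noting the fiber-counting bound forces $k\geq K$ whenever $\delta$ and the geometry cooperate, and handling the boundary $k=K$ case where the expression equals $\lambda(A)+(K+\delta)\lambda(B)$ when $\delta$ is at its extreme). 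This establishes \eqref{min_sum2}.

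For the improvement under $D_A/D_B\leq K$, the point is that now $k\leq \lceil D_A/D_B\rceil\leq K$, so in the second alternative of Lemma \ref{ruzsa1} the integer $k$ is at most $\ell:=\lceil D_A/D_B\rceil$. Since the map $j\mapsto \frac{j+1}{j}\lambda(A)+\frac{j+1}{2}\lambda(B)$ is strictly decreasing in $j$ precisely when $\lambda(A)/\lambda(B)> j(j+1)/2$ and increasing otherwise — and here, because $\ell\leq K$ and $\lambda(A)/\lambda(B)=\frac{K(K-1)}{2}+K\delta\geq \frac{\ell(\ell-1)}{2}$, one checks $\frac{j+1}{j}\lambda(A)+\frac{j+1}{2}\lambda(B)$ is minimized over $1\leq j\leq \ell$ at $j=\ell$ — the bound $\lambda(A+B)\geq \frac{\ell+1}{\ell}\lambda(A)+\frac{\ell+1}{2}\lambda(B)$ follows, which is the claimed estimate. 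I would also note that this estimate, when $D_A/D_B\leq K$, indeed dominates \eqref{min_sum2}, so no information is lost.

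The main obstacle I anticipate is the careful bookkeeping in the monotonicity/minimization argument for $\frac{j+1}{j}\lambda(A)+\frac{j+1}{2}\lambda(B)$ as a function of the integer $j$, and in particular verifying that the fiber-counting bound on $k$ (both the upper bound $k\leq\lceil D_A/D_B\rceil$ and the lower bound relating $k$ to $K,\delta$) is tight enough to pin down the minimizer. Everything else — the two cases of the dichotomy, the algebraic identity linking $\lambda(A)/\lambda(B)$ to $(K,\delta)$ — is routine once that combinatorial lemma about $k$ is in place. I would isolate the statement ``$k\leq\lceil D_A/D_B\rceil$'' as a one-line sublemma and the minimization as a short calculus remark, keeping the main proof to a clean case analysis.
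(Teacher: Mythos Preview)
Your approach is essentially the paper's: apply Lemma~\ref{ruzsa1} and analyze the function $f(j)=\frac{j+1}{j}\lambda(A)+\frac{j+1}{2}\lambda(B)$, together with the observation $k\le\lceil D_A/D_B\rceil$ for the refinement. One step is off, however. To obtain \eqref{min_sum2} you propose first to prove $k\ge K$ by fiber-counting and then to minimize $f$ over $k\ge K$. No such lower bound on $k$ is available, nor is it needed: the integral identity $\lambda(A)\le kD_B$ compares $k$ with $D_B$, not with $\lambda(B)$, so it says nothing about $K$ (which is defined via $\lambda(A)/\lambda(B)$). Indeed $k<K$ is perfectly possible---take $A$ contained in an interval of length less than $D_B$ while $\lambda(A)/\lambda(B)$ is large, so that $k=1$ but $K$ is arbitrary.

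The fix is exactly the monotonicity you already invoke for the refinement: $f$ is decreasing on $\{1,\dots,K\}$ and increasing on $\{K,K+1,\dots\}$, hence $f(k)\ge f(K)=\lambda(A)+(K+\delta)\lambda(B)$ for \emph{every} positive integer $k$, with no constraint on $k$ whatsoever. This is precisely how the paper argues. Once you drop the spurious lower bound on $k$ and use the global minimum of $f$ instead, your proof coincides with the paper's.
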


\begin{rem}
This theorem is mostly due tu Ruzsa in \cite{Ruz}. Our only contribution consists in noticing that the lower bound can be improved in case  
$D_A/D_B\leq K$. In case $D_A\leq D_B$, this remark yields the lower bound 
\begin{equation}\label{minD_A<D_B}
 \lambda(A+B)\geq \lambda(A)+\min({\rm diam}(B), \lambda(A)+\lambda(B)).
\end{equation}
\end{rem}

\begin{proof}
Let us assume that $ \lambda(A+B)< \lambda(A)+{\rm diam}(B)$. Then by
Lemma \ref{ruzsa1}, $\lambda(A+B)\geq f(K_A)$ holds
 with $f(k)=\frac{k+1}{k}\lambda(A)+\frac{k+1}{2}\lambda(B)$ and $$K_A=\max\{k'\in\N \, : \, \exists x\in[0,D_B) \, : \, \#\{n\in\N \, : \, x+nD_B\in A\}\geq k'\}.$$
 As noticed by Ruzsa, 
the sequence $(f(k))_{k\geq 1}$ is decreasing for $k\leq K$ and increasing for $k\geq K$ with $K$ the integer defined by \eqref{defKdelta}.
Therefore $f(k)$ is minimal for $k=K$ and we get the lower bound
\begin{equation*}
\lambda(A+B)\geq f(K_A)\geq f(K)=\lambda(A)+(K+\delta)\lambda(B).
\end{equation*}
On the other hand, it is clear that $K_A\leq \lceil D_A/D_B\rceil$. Therefore, if $D_A/D_B\leq K$ 
then we have
$\lambda(A+B)\geq f(\lceil D_A/D_B\rceil)$.\end{proof}

As an immediate consequence of Theorem \ref{cor_ruzsa} we derive the following proposition.
\begin{prop}\label{K+delta}
Let $A$ and $B$ be bounded subsets of $\mathbb{R}$ such that $\lambda(B)\not=0$. Write $D_B={\rm diam}(B)$, $D_A={\rm diam}(A)$ and define $K\in\N^*$ and $\delta\in \R$ as in \eqref{defKdelta}.
If we have either
$$\lambda(A+B)<  \lambda(A)+(K+\delta)\lambda(B).$$
or $D_A/D_B\leq K$ and 
$$\lambda(A+B)< \frac{\lceil{D_A/D_B}\rceil+1}{\lceil{D_A/D_B}\rceil}\lambda(A)+\frac{\lceil{D_A/D_B}\rceil+1}{2}\lambda(B)$$
then ${\rm diam}(B) \leq  \lambda(A+B)-\lambda(A)$.
 \end{prop}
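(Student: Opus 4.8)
The plan is to obtain this as a direct consequence of Theorem~\ref{cor_ruzsa} by contraposition; no new idea is needed, and the only care required is in handling the disjunction appearing in the hypothesis.

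First I would suppose, aiming for a contradiction, that the conclusion fails, that is, ${\rm diam}(B) > \lambda(A+B)-\lambda(A)$, equivalently $\lambda(A+B) < \lambda(A)+{\rm diam}(B)$. This is precisely the negation of the first alternative in the dichotomy of Theorem~\ref{cor_ruzsa}. Hence that theorem forces the second alternative: $\lambda(A+B)\geq \lambda(A)+(K+\delta)\lambda(B)$; moreover, if in addition $D_A/D_B\leq K$, then the ``furthermore'' clause of Theorem~\ref{cor_ruzsa} upgrades this to $\lambda(A+B)\geq \frac{\lceil D_A/D_B\rceil+1}{\lceil D_A/D_B\rceil}\lambda(A)+\frac{\lceil D_A/D_B\rceil+1}{2}\lambda(B)$.

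Next I would check that each branch of the hypothesis is then contradicted. If the hypothesis is $\lambda(A+B)<\lambda(A)+(K+\delta)\lambda(B)$, this directly contradicts the first lower bound obtained above. If instead the hypothesis is that $D_A/D_B\leq K$ together with $\lambda(A+B)< \frac{\lceil D_A/D_B\rceil+1}{\lceil D_A/D_B\rceil}\lambda(A)+\frac{\lceil D_A/D_B\rceil+1}{2}\lambda(B)$, then, since $D_A/D_B\leq K$, I may invoke the second (stronger) lower bound, which again produces a contradiction. In both cases the assumption $\lambda(A+B)<\lambda(A)+{\rm diam}(B)$ is untenable, so ${\rm diam}(B)\leq \lambda(A+B)-\lambda(A)$.

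There is no genuine obstacle here: the proposition is a formal corollary of Theorem~\ref{cor_ruzsa}. The only subtlety to watch is that the two hypotheses correspond to the two different lower bounds in Theorem~\ref{cor_ruzsa} (the unconditional one and the one valid when $D_A/D_B\leq K$), so one must match each branch of the disjunction with the correct estimate rather than conflating them.
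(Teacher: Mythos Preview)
Your proposal is correct and matches the paper's approach exactly: the paper states this proposition ``as an immediate consequence of Theorem~\ref{cor_ruzsa}'' without further proof, and your contrapositive argument is precisely how that immediate deduction goes. There is nothing to add.
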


As noticed by Ruzsa in \cite{Ruz},  in case $0<\lambda(A)\leq\lambda(B)$ \eqref{min_sum2} yields
$\lambda(A+B)\geq \min(2\lambda(A)+\lambda(B),\lambda(A)+\rm{diam}(B))$. In Theorem \ref{intro_3k-4} we extend his result to get a continuous version of the complete Freiman $3k-4$ theorem for sets of integers.

%
As a consequence of our proof, we can derive, as in the discrete case, that the interval $I$ included in $A+B$ has lower bound $e:=\sup\{x\in [0,D_A], x\not\in A+B\}$ and upper bound $c:=\inf\{x\in[D_A,D_A+D_B], x\not\in A+B\}$. Furthermore we get that $\lambda(A\cap[0,x])+\lambda(B\cap[0,x])> x$ for $x>e$ and $\lambda(A\cap[0,x+D_A-D_B])+\lambda(B\cap[0,x])< x+\lambda(A)+\lambda(B)-D_B$ for $x<c-D_A$.

\begin{proof}[Proof of the first two items of Theorem \ref{intro_3k-4}]
We use the notation introduced in Theorem \ref{cor_ruzsa}. 
We first prove that each hypothesis yields the first two points. This is the straightforward part of the proof. The proof of the third item is more demanding and will require some intermediate results.
\begin{itemize}
\item Let us assume that we have hypothesis (i) and that $\lambda(A)\leq \lambda(B)$, say. Then we have $\lambda(A)/\lambda(B)\leq 1$ thus $K=1$ or $(K,\delta)=(2,0)$ and Theorem \ref{cor_ruzsa} gives either $\lambda(A+B)\geq 2\lambda(A)+\lambda(B)$ or  $\lambda(A+B)\geq \lambda(A)+{\rm diam}(B)$. Since  $\lambda(A+B)< 2\lambda(A)+\lambda(B)$, we must have 
${\rm{diam}}(B)\leq \lambda(A+B)-\lambda(A)$. \\
On the other hand $$\frac{\lambda(B)}{\lambda(A)}=\frac{K'(K'-1)}{2}+K'\delta'$$ with $K'\geq 2$ and $0\leq\delta'\leq 1$ thus $\lambda(A+B)< 2\lambda(A)+\lambda(B)\leq\lambda(A)+2\lambda(B)\leq \lambda(A)+(K'+\delta')\lambda(B)$ and Theorem \ref{cor_ruzsa} yields ${\rm{diam}}(A)\leq \lambda(A+B)-\lambda(B)$.

\item If ${\rm{diam}}(B)\leq {\rm{diam}}(A)$, then \eqref{minD_A<D_B} with $(A,B)$ in place of $(B,A)$ gives either $\lambda(A+B)\geq \lambda(A)+2\lambda(B)$ or  $\lambda(A+B)\geq \lambda(B)+{\rm diam}(A)$ thus hypothesis (ii) gives ${\rm{diam}}(A)\leq \lambda(A+B)-\lambda(B)$.\\
If $\lambda(A)\leq\lambda(B)$ then  ${\rm{diam}}(B)\leq {\rm{diam}}(A)\leq \lambda(A+B)-\lambda(B)\leq \lambda(A+B)-\lambda(A)$ and we are done.\\ 
If $\lambda(A)>\lambda(B)$ then we have $\lambda(A+B)< \lambda(A)+\lambda(B)+\min(\lambda(A),\lambda(B))$ and the first part of this proof gives the result.
\end{itemize}
\end{proof}

We now state some lemma that we shall need to prove the last part of Theorem \ref{intro_3k-4}. For integer sets, a discrete analogue of this lemma was also used in the proof of this part. Our proof is very similar to the proof of the discrete version which can be found in \cite{Gr}.

\begin{lem}\label{lem_mes}
If $x\not\in A+B$ and $x\geq 0$ then 
\begin{equation*}
\lambda([0,x]\cap A)+\lambda([0,x]\cap B)\leq x.
\end{equation*}
If $x\not\in A+B$ and $x\leq D_A+D_B$ then 
\begin{equation*}
\lambda([x-D_B,D_A]\cap A)+\lambda([x-D_A,D_B]\cap B)\leq D_A+D_B-x.
\end{equation*}
\end{lem}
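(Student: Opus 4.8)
The plan is to establish the first inequality by a direct disjointness argument and then obtain the second from the first by reflecting both sets. Throughout I use the standing normalization $\inf(A)=\inf(B)=0$, so that $A\subseteq[0,D_A]$ and $B\subseteq[0,D_B]$; the general statement reduces to this case by translating $A$ and $B$.

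For the first inequality, fix $x\geq 0$ with $x\notin A+B$ and put $A_x=A\cap[0,x]$ and $B_x=B\cap[0,x]$. The point is that the reflected set $x-B_x=\{x-b:b\in B_x\}$ lies inside $[0,x]$ and is disjoint from $A_x$: a common element would give $a\in A$ and $b\in B$ with $a+b=x$, contradicting $x\notin A+B$. Since inner Lebesgue measure is invariant under isometries of $\R$ and superadditive on disjoint sets,
\[
\lambda([0,x]\cap A)+\lambda([0,x]\cap B)=\lambda(A_x)+\lambda(x-B_x)\leq\lambda\bigl(A_x\cup(x-B_x)\bigr)\leq\lambda([0,x])=x .
\]
(No measurability assumption is really needed here; when $A,B$ are measurable this is plain additivity.)

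For the second inequality I would apply the first one to the reflected sets $\widetilde A=D_A-A\subseteq[0,D_A]$ and $\widetilde B=D_B-B\subseteq[0,D_B]$, for which $\widetilde A+\widetilde B=D_A+D_B-(A+B)$. Hence, for $x\leq D_A+D_B$, the condition $x\notin A+B$ is equivalent to $y\notin\widetilde A+\widetilde B$ with $y:=D_A+D_B-x\geq 0$, and the first inequality yields $\lambda(\widetilde A\cap[0,y])+\lambda(\widetilde B\cap[0,y])\leq y$. It then remains to rewrite the two terms: using $D_A-y=x-D_B$ one checks that $\widetilde A\cap[0,y]=D_A-\bigl(A\cap[x-D_B,D_A]\bigr)$, and similarly $\widetilde B\cap[0,y]=D_B-\bigl(B\cap[x-D_A,D_B]\bigr)$, so by isometry-invariance the inequality becomes exactly $\lambda([x-D_B,D_A]\cap A)+\lambda([x-D_A,D_B]\cap B)\leq D_A+D_B-x$.

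There is no real obstacle in this argument; the only things to watch are the normalization $\inf(A)=\inf(B)=0$ (needed so that the truncations to $[0,x]$ and to $[x-D_B,D_A]$ pick out the intended elements, even when $x-D_B<0$, in which case the intersection is just $A$ resp.\ $B$) and the bookkeeping of which reflected interval corresponds to which under the substitution $y=D_A+D_B-x$.
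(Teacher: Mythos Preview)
Your argument is correct and essentially identical to the paper's. For the first inequality the paper shows $[0,x]\subset([0,x]\setminus B)\cup([0,x]\setminus(x-A))$ and takes measures, which is the complementary phrasing of your disjointness of $A_x$ and $x-B_x$ inside $[0,x]$; for the second inequality the paper also reflects, setting $A'=D_A-A$, $B'=D_B-B$, $x'=D_A+D_B-x$, and applies the first part.
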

\begin{proof}[Proof of the lemma]
\begin{itemize}
\item If $x\not\in A+B$, then for all $b\in[0,x]$, we have either $b\not\in B$ or $x-b\not\in A$ thus $[0,x]\subset ([0,x]\setminus B)\cup ([0,x]\setminus (x-A))$. This yields \\$x\leq x-\lambda([0,x]\cap B)+x-\lambda([0,x]\cap A)$ and the first inequality.
\item We write $A'=D_A-A$, $B'=D_B-B$ and $x'=D_A+D_B-x$. If $x\not\in A+B$, then $x'\not\in A'+B'$ and an application of the first inequality yields the second one.
\end{itemize}
\end{proof}
\begin{proof}[Proof of the last part of Theorem \ref{intro_3k-4}]
We now turn to the end of the proof of Theorem \ref{intro_3k-4} and prove that under one of the two hypothesis of this theorem there exists an interval $I$ of length at least $\lambda(A)+\lambda(B)$ included in $A+B$. \\
We assume without loss of generality that $D_A\geq D_B$ and $\lambda(A+B)< \lambda(A)+2\lambda(B)$ (which is implied by $\lambda(A+B)< \lambda(A)+\lambda(B)+\min(\lambda(A),\lambda(B))$). The part of the theorem already proven together with any of the two hypothesis imply that $D_A\leq \lambda(A+B)-\lambda(B)<\lambda(A)+\lambda(B)$. We write $\Delta=\lambda(A)+\lambda(B)-D_A$. By hypothesis $\Delta>0$.\\
Reasoning modulo $D_A$ as Ruzsa does in  \cite{Ruz}, we write 
$$\lambda(A+B)=\mu_A(A+B)+\mu_A\left(\left\{x\in[0,D_A] \, :\, x,x+D_A\in A+B\right\}\right),$$
where $\mu_A$ denotes the Haar mesure modulo $D_A$.
Since $0,D_A \in A$, we have 
$$B\subset  \left\{x\in[0,D_B] \, :\, x,x+D_A\in A+B\right\}.$$
 Therefore
 \begin{equation}\label{mes_som}
 \lambda(A+B)\geq\mu_A(A+B)+\mu_A\left(\left\{x\in[0,D_A]\cap B^c \, :\, x,x+D_A\in A+B\right\}\right)+\mu_A(B).
 \end{equation}
For any positive real number $x$, we define 
$$g_A(x)=\lambda(A\cap[0,x]), \quad g_B(x)=\lambda(B\cap[0,x]), \quad $$
$$ g(x)=g_A(x)+g_B(x) \, \mbox{ and } \, h(x)=g_A(x+D_A-D_B)+g_B(x).$$
Lemma \ref{lem_mes} can be rephrased as follows :
\begin{equation}\label{majx_petit}
(x\not\in A+B, \quad x\geq 0) \Rightarrow g(x)\leq x,
\end{equation}
\begin{equation}\label{majx_grand}
 (y+D_A\not\in A+B, \quad 0\leq y\leq D_B) \Rightarrow h(y)\geq y+D_A-D_B+\Delta.  
\end{equation}
We first notice that $g$ and $h$ are non decreasing continuous positive functions. They are also $2$-Lipschitz functions.
We can divide the interval $[0,D_A]$ into $3$ areas: 
\begin{itemize}
\item $Z_1$ will be the set of $x\in[0,D_A]$ such that $g(x)\leq x$;
\item $Z_2$ will be the set of $x\in[0,D_A]$ such that $g(x)>x$ and $h(x)< x+D_A-D_B+\Delta$;
\item $Z_3$ will be the set of $x\in[0,D_A]$ such that $h(x)\geq x+D_A-D_B+\Delta$.
\end{itemize}

\bigskip
In the following picture, we draw two functions $g_A$ and $g_B$, the corresponding functions $g$ and $h$ and the corresponding areas $Z_1$, $Z_2$ and $Z_3$. The main part of the proof will consist in showing that with our hypothesis this drawing covers the possible configurations of the curves. More precisely, we shall prove that $[0,D_A]$ may be partitionned in three consecutive intervals $I_1$, $I_2$ and $I_3$ such that $Z_1\subset I_1\subset Z_1\cup Z_2$, $Z_3\subset I_3\subset Z_3\cup Z_2$ and $I_2\subset Z_2$.\\
First we explain why the large black interval $I_2\cup I_3\cup(D_A+I_1)\cup(D_A+I_2)$ traced below the curves will be contained in $A+B$ (in our drawing the sets $Z_i$ are the actual intervals $I_i$).

\bigskip

\begin{tikzpicture}[xscale=10,yscale=10]
\draw [<->] (0,1.2) -- (0,0) -- (1.2,0);
\node [below left] at (0,0) {$0$};
\draw [] (0,0) -- (1,1);
\draw [] (0,0.425+0.025+0.5+0.1-1) -- (1,0.425+0.025+0.5+0.1);
\draw [] (0,0.425+0.025+0.5+0.1-1+0.15) -- (1,0.425+0.025+0.5+0.1+0.15);
\draw [dashed] (1,0) -- (1,1.2);
\node [below] at (1,0) {$D_A$};
\draw [dashed] (0.85,0) -- (0.85,1.2);
\node [below] at (0.85,0) {$D_B$};

\draw [<->] (1.01,1+0.425+0.025+0.5+0.1-1) -- (1.01,1) node[above right]{$\Delta$};
\draw [<->] (1.015,1+0.425+0.025+0.5+0.1-1) -- (1.015,1+0.425+0.025+0.5+0.1-1+0.15);
\node[right] at (1.015,1+0.425+0.025+0.5+0.1-1+0.075) {$D_A-D_B$};

\draw[orange, domain=0:0.4] plot (\x,{\x*\x/0.8});
\draw[orange,domain=0.4:0.6] plot (\x,{\x-0.2});
\draw[orange,domain=0.6:1] plot (\x,{0.5+0.1-(1-\x)*(1-\x)/(2*(1-0.4-0.2))}) node[right] {$g_A(x)$};

\draw[cyan, domain=0:0.3] plot (\x,{\x*\x/0.6});
\draw[cyan,domain=0.3:0.35] plot (\x,{\x-0.15});
\draw[cyan,domain=0.35:0.85] plot (\x,{0.425+0.025-(0.85-\x)*(0.85-\x)/(2*(0.85-0.3-0.05))});
\draw[cyan,domain=0.85:1] plot (\x,{0.425+0.025}) node[right] {$g_B(x)$};

\draw[thick,red, domain=0:0.3] plot (\x,{\x*\x/0.6+\x*\x/0.8});
\draw[fill,red] (0.3455,0.3455) circle [radius=0.005] ;
\draw [dashed, red] (0.3455,0) -- (0.3455,0.3455);
\node [below left, red] at (0.36,0){$b_1$};
\draw[thick,red,domain=0.3:0.35] plot (\x,{\x-0.15+\x*\x/0.8});
\draw[thick,red,domain=0.35:0.4] plot (\x,{0.425+0.025-(0.85-\x)*(0.85-\x)/(2*(0.85-0.3-0.05))+\x*\x/0.8});
\draw[thick,red,domain=0.4:0.6] plot (\x,{0.425+0.025-(0.85-\x)*(0.85-\x)/(2*(0.85-0.3-0.05))+\x-0.2})node[below right] {$g(x)$};
\draw[thick,red,domain=0.6:0.85] plot (\x,{0.425+0.025-(0.85-\x)*(0.85-\x)/(2*(0.85-0.3-0.05))+0.5+0.1-(1-\x)*(1-\x)/(2*(1-0.4-0.2))});
\draw[thick,red,domain=0.85:1] plot (\x,{0.425+0.025+0.5+0.1-(1-\x)*(1-\x)/(2*(1-0.4-0.2))}) ;

\draw[thick,blue, domain=0:0.25] plot (\x,{\x*\x/0.6+(\x+0.15)*(\x+0.15)/0.8});
\draw[thick,blue, domain=0.25:0.3] plot (\x,{\x*\x/0.6+\x-0.2+0.15});
\draw[thick, blue,domain=0.3:0.35] plot (\x,{\x-0.15+\x-0.2+0.15});
\draw[fill,blue] (0.405,0.605) circle [radius=0.005] ;
\draw [dashed, blue] (0.405,0) -- (0.405,0.605);
\node [below right, blue] at (0.4,0){$b_2$};
\draw[thick, blue, domain=0.35:0.45] plot (\x,{0.425+0.025-(0.85-\x)*(0.85-\x)/(2*(0.85-0.3-0.05))+\x-0.2+0.15})node[above left] {$h(x)$};
\draw[thick, blue,domain=0.45:0.85] plot (\x,{0.425+0.025-(0.85-\x)*(0.85-\x)/(2*(0.85-0.3-0.05))+ 0.5+0.1-(1-(\x+0.15))*(1-(\x+0.15))/(2*(1-0.4-0.2))});
\draw[thick,blue,domain=0.7:0.85] plot (\x,{0.425+0.025-(0.85-\x)*(0.85-\x)/(2*(0.85-0.3-0.05))+0.5+0.1-(1-(\x+0.15))*(1-(\x+0.15))/(2*(1-0.4-0.2)) });
\draw[thick,blue,domain=0.85:1] plot (\x,{0.425+0.025+0.5+0.1});
   
 \draw[ultra thick, blue] (0.405,0) -- (1,0);
 \node [below, red] at (0.185,0){$Z_1$};
 \node [below, blue] at (0.7025,0){$Z_3$};
\node [below, violet] at (0.38,0){$Z_2$};
 \draw[ultra thick, red] (0.3455,0) -- (0,0);
  \draw[line width=4, violet] (0.3455,0) -- (0.405,0);
  
 \draw []  (0,-0.1) -- (1,-0.1);
 \draw []  (0,-0.11) -- (0,-0.09);
 \node [below left] at (0,-0.10){$0$};
  \draw []  (1,-0.11) -- (1,-0.09);
 \node [below ] at (1,-0.10){$D_A$};
 \draw[line width=4] (0.3455,-0.1) -- (1,-0.1);
 \draw[line width=4] (0,-0.15) -- (0.25,-0.15); 
  \draw[line width=4] (0.405,-0.15) -- (0,-0.15); 
\draw []  (0.405,-0.15) -- (0.85,-0.15);
\draw []  (0.85,-0.16) -- (0.85,-0.14);
\node [below ] at (0.85,-0.15){$D_A+D_B$};
 \draw []  (0,-0.16) -- (0,-0.14);
\node [below ] at (0,-0.15){$D_A$};

\end{tikzpicture}


\bigskip

Since $h(x)\leq g(x)+D_A-D_B$, by \eqref{majx_grand} and \eqref{majx_petit}, $Z_1$ is a subset of $A+B-D_A$, $Z_3$ is a subset of $A+B$ and $Z_2$ is a subset of $(A+B)\cap (A+B-D_A)$. Indeed
\begin{align*}
x\in Z_1&\Rightarrow h(x)\leq g(x)+D_A-D_B\leq x+D_A-D_B&\Rightarrow& \, x+D_A\in A+B&\\
x\in Z_3&\Rightarrow g(x)+D_A-D_B\geq h(x)\geq x+D_A-D_B +\Delta &\Rightarrow& \, g(x)>x \Rightarrow\, x\in A+B.
\end{align*}

Since $[0,D_A]=Z_1\cup Z_2\cup Z_3$ and $Z_i\subset A+B \bmod D_A$ for $i=1,2,3$, this leads to $\mu_A(A+B)=D_A$ and to $Z_2\subset \left\{x\in[0,D_A] \, :\, x,x+D_A\in A+B\right\}$.\\
With \eqref{mes_som} this yields
\begin{align}
\lambda(A+B)&\geq\mu_A(A+B)+\lambda(B)+\mu_A\left(\left\{x\in[0,D_A]\cap B^c \, :\, x,x+D_A\in A+B\right\}\right)\notag\\\label{minsum}
&\geq D_A+\lambda(B)+\lambda(Z_2\cap B^c)=\lambda(A)+2\lambda(B)+\lambda(Z_2\cap B^c)-\Delta.
\end{align}
We shall now prove that there exist $b_1,B_2\in [0,D_A]$ such that $0\leq b_1<b_1+\Delta\leq b_2\leq D_B\leq D_A$ and the intervals $I_1=[0,b_1]$, $I_2=(b_1,b_2)$ and $I_3=[b_2,D_A]$ satisfy $Z_1\subset I_1\subset Z_1\cup Z_2$, $Z_3\subset I_3\subset Z_3\cup Z_2$ and $I_2\subset Z_2$. This will imply that $I=(b_1,D_A+b_2)$ is an interval included in $A+B$ of size at least $D_A+\Delta=\lambda(A)+\lambda(B)$, hence the result.

\begin{itemize}
\item We first prove that $[D_B,D_A]\subset Z_3\cup Z_2\subset A+B$.\\ 
We have $g(D_A)=\lambda(A)+\lambda(B)=D_A+\Delta$ and for $x\in[D_B,D_A]$, we have 
$$g(x)=g(D_A)-\lambda(A\cap[x,D_A])\geq g(D_A)-(D_A-x)=x+\Delta>x$$
thus $x\in Z_3\cup Z_2$.
\item Now we prove that we can not switch from $Z_3$ to $Z_1$ when $x$ grows, meaning that for $x,y\in [0, D_B]$, $x\in Z_1$ and $y\in Z_3$ 
 imply $x<y$.
 We assume for contradiction that there exist $x,y\in[0, D_B]$ such that $x\geq y$, $ x\in Z_1$, $y\in Z_3$ and $(x,y)\in Z_2$.

  \begin{tikzpicture}[xscale=10,yscale=10]
\draw [->] (0.1,0.05) -- (0.9,0.05);
\draw [] (0.2,0.2) -- (1,1);
\draw [dashed] (0.2,0.2) -- (0.05,0.05);
\draw [dashed] (0.1,0.2) -- (0,0.1);
\draw [] (0.1,0.2) -- (1,1.1);
\draw [] (0.1,0.4) -- (1,1.3);
\draw [dashed] (0,0.3) -- (0.1,0.4);
\draw[thick, red] (0.1,0.58) to [out=5,in=195] (1,0.85) node[right]{$g$};
\draw[thick, blue] (0.1,0.6) to [out=7,in=190] (1,0.9) node[right]{$h$};
\draw[fill,red] (0.78,0.78) circle [radius=0.005] node [below right]{$(x,x)$};
\draw [dashed, red] (0.78,0.78) -- (0.78,0.05);
\draw[fill,red] (0.78,0.05) circle [radius=0.005] node [above right ]{$x$};
\draw[fill,blue] (0.365,0.665) circle [radius=0.005] node [above left]{$(y,y+\Delta+D_A-D_B)$};
\draw [dashed, blue] (0.365,0.665) -- (0.365,0.05);
\draw[fill,blue] (0.365,0.05) circle [radius=0.005] node [below ]{$y$};
\draw[fill] (0.535,0.05) circle [radius=0.005] node [below ]{$y+D_A-D_B$};
\draw [dashed,] (0.535,0.69) -- (0.535,0.05);
\draw[fill] (0.535,0.685) circle [radius=0.005];
\draw [<->] (0.22,0.22) -- (0.22,0.32);
\node[left] at (0.22,0.26){$\Delta$};
\draw [<->] (0.15,0.25) -- (0.15,0.45);
\node[left] at (0.15,0.35) {$D_A-D_B$};
 \end{tikzpicture}

We must have
 $g(x)\leq x$ and $h(y)\geq y+D_A-D_B+\Delta$, thus
$$g_A(y+D_A-D_B)-g_A(x)+g_B(y)-g_B(x)\geq y-x+(D_A-D_B) +\Delta.$$
Since $x\geq y$, $g_B$ is a non decreasing function and $\Delta>0$, this would lead to 
$$\lambda([x, y+D_A-D_B]\cap A)>\lambda([x, y+D_A-D_B])$$
in case $x\leq y+D_A-D_B$, a contradiction. Thus we must have $x> y+D_A-D_B$. 

Since $x> y+D_A-D_B$, we have
$$g_A(x)-g_A(y+D_A-D_B)+g_B(x)-g_B(y)\leq x-y-(D_A-D_B)-\Delta$$
which yields $\lambda(B\cap[y,x])\leq x-y-(D_A-D_B)-\Delta$ and
\begin{equation}\label{bc}
\lambda([y,x]\cap B^c)\geq\Delta+D_A-D_B.
\end{equation}
With \eqref{minsum}, we get $\lambda(A+B)\geq \lambda(A)+2\lambda(B)+D_A-D_B$
which contradicts the hypothesis. Since $0\in Z_1$ and $D_B\in Z_3$, we proved almost all the announced results on the intervals $I_i$. It remains to prove that $I_2$ has size at least $\Delta$. 
\item 
By continuity of $g$ and $h$, we have $h(b_2)=b_2+\Delta+D_A-D_B$ and $g(b_1)=b_1$. Since $g(b_2)=h(b_2)-\lambda(A\cap(b_2, b_2+D_A-D_B)\geq h(b_2)-(D_A-D_B)$ and since $g$ is a $2$-Lipschitz function, we have
$$2(b_2-b_1)\geq g(b_2)-g(b_1)\geq h(b_2)-(D_A-D_B)-g(b_1)= b_2+\Delta-b_1$$
thus $b_2-b_1\geq \Delta$ and $A+B$ contains an interval of size at least $\lambda(A)+\lambda(B)$.
\end{itemize}

\end{proof}

\section{Some observation on sets with large density.}
Our graphic interpretation for large sets of real numbers with small sumset gives rise to further comments. 

For this part, let $A$ and $B$ be some bounded closed subsets of real numbers such that $D_B\leq D_A$ and $\Delta:=\lambda(A)+\lambda(B)-D_A>0$. We define the functions $g$ and $h$ as in section 2.

As explained in section 2, we can divide $[0,D_A]$ into three sets $Z_1$, $Z_2$ and $Z_3$ corresponding to the three regions of $[0,D_A]\times[0,D_A]$ delimited by the lines $L_1$ and $L_2$ respectively defined by the equations $y=x$ and $y=x+\Delta$. $Z_1$ is the set of real numbers in $[0,D_A]$ such that the function $g$ is under the line $L_1$, $Z_3$ those for which $h$ is above $L_2$ and $Z_2$ what remains. $0$ is in $Z_1$, $D_A$ in $Z_3$ and to switch from $Z_1$ to $Z_3$ or reciprocally, one has to cross $Z_2$. 
One of the main ingredients in the proof of Theorem \ref{intro_3k-4} was to prove that there was at most one such crossing, therefore from $Z_1$ to $Z_3$ and that there were no crossing from $Z_3$ to $Z_1$. We shall call the crossings from $Z_1$ to $Z_3$ the "up crossings" and the crossings from $Z_3$ to $Z_1$ the "down crossings" (although the functions $g$ and $h$ remain nondecreasing functions). 

Let $m$ be the number of down crossings. 
We illustrate this by the following picture. For simplicity, we chose $D_A=D_B$ so that $g=h$ and $m=1$.

\begin{tikzpicture}[xscale=10,yscale=10]
\draw [<->] (0,1.2) -- (0,0) -- (1.2,0);
\node [below left] at (0,0) {$0$};
\draw [] (0,0) -- (1,1);
\draw [] (0,0.05) -- (1,1.05);
\draw [dashed] (1,0) -- (1,1.05);
\node [below] at (1,0) {$D_A$};
\node at (0.5,1) {$m=1$};
\draw [<->] (1.01,1+0.05) -- (1.01,1) node[above right]{$\Delta$};
\draw[thick, red] (0,0) to [out=20,in=240] (0.2,0.2) ;
\draw[thick,red,domain=0.2:0.25] plot (\x,{2*\x-0.2});
\draw[thick, red] (0.25,0.3) to [out=60,in=190] (0.6,0.58);
\node[red] at (0.4,0.55) {$g$};
\draw[thick, red] (0.6,0.58) to [out=10,in=240] (0.8,0.8) ;
\draw[thick,red,domain=0.8:0.85] plot (\x,{2*\x-0.8});
\draw[thick, red] (0.85,0.9) to [out=60,in=180] (1,1.05) ;
\draw [dashed, red] (0.25,0) -- (0.25,0.3);
\draw [dashed, red] (0.2,0) -- (0.2,0.2);
\draw [dashed, red] (0.575,0) -- (0.575,0.575);
\draw [dashed, red] (0.5,0) -- (0.5,0.55);
\draw [dashed, red] (0.8,0) -- (0.8,0.8);
\draw [dashed, red] (0.85,0) -- (0.85,0.9);
\draw[ultra thick, blue] (0.25,0) -- (0.5,0);
\draw[ultra thick, blue] (0.85,0) -- (1,0);
 \node [below, red] at (0.1,0){$I_0^{(1)}$};
 \node [below, red] at (0.7125,0){$I_0^{(2)}$};
  \node [below, violet] at (0.225,0){$I_0^{(2)}$};
    \node [below, violet] at (0.825,0){$I_1^{(2)}$};
 \node [below, violet] at (0.535,0){$J_1$};
 \node [below, blue] at (0.925,0){$I_1^{(3)}$};
\node [below, blue] at (0.36,0){$I_0^{(3)}$};
 \draw[ultra thick, red] (0.2,0) -- (0,0);
 \draw[ultra thick, red] (0.575,0) -- (0.8,0);
 \draw[line width=4, violet] (0.575,0) -- (0.5,0);
  \draw[line width=4, violet] (0.8,0) -- (0.85,0);
  \draw[line width=4, violet] (0.2,0) -- (0.25,0);
  
\draw []  (0,-0.1) -- (1,-0.1);
 \draw[line width=4] (0.2,-0.1) -- (0.575,-0.1);
 \draw[line width=4] (0.8,-0.1) -- (1,-0.1);
 \draw[line width=4] (0,-0.15) -- (0.25,-0.15); 
  \draw[line width=4] (0.5,-0.15) -- (0.85,-0.15); 
\draw []  (0,-0.15) -- (1,-0.15);

\end{tikzpicture}

\bigskip

We proved that for each down crossing, we gain a subset of $B^c$ of measure at least $\Delta+D_A-D_B$. It means that 
$$\lambda(A+B)\geq \lambda(B)+D_A+\lambda(B^c\cap Z_2)\geq  \lambda(B)+D_A+m(\Delta+D_A-D_B).$$
Furthermore, extending the previous remarks, we can write $[0,D_A]$ as a union of $4m+3$ consecutive intervals as follows:
$$[0,D_A]=I_0^{(1)}\cup I_0^{(2)}\cup I_0^{(3)}\cup\bigcup_{k=1}^m \left(J_k\cup I_k^{(1)}\cup I_k^{(2)}\cup I_k^{(3)}\right)$$
with $Z_1\subset\bigcup_{k=0}^m I_k^{(1)}\subset Z_1\cup Z_2$,  $Z_3\subset\bigcup_{k=0}^m I_k^{(3)}\subset Z_3\cup Z_2$ and $\bigcup_{k=0}^m I_k^{(2)}\cup \bigcup_{k=1}^m J_k\subset Z_2$.
The intervals $I_k^{(2)}$ correspond to up crossings whereas the intervals $J_k$ correspond down crossings. The length of up crossings is at least $\Delta$ whereas those of down crossings is at least $\Delta+D_A-D_B$.

Furthermore the set $A+B$ contains the folowing union of $2m+1$ intervals 
$$\bigcup_{k=1}^m\left( I_{k-1}^{(2)}\cup I_{k-1}^{(3)}\cup J_k\right) \cup\left(I_m^{(2)}\cup I_m^{(3)}\cup (D_A+ I_0^{(1)})\cup (D_A+I_0^{(2)})\right)\cup \left(D_A+\bigcup_{k=1}^m\left( J_k\cup I_{k}^{(1)}\cup I_{k}^{(2)}\right)\right)$$ 
Here each set in brackets is a single interval as a union of consecutive intervals.
 
ô\bigskip

These observations leads to Theorem \ref{intro_relaxed_3k-4}.
%
Note that even in the case $m=0$, this theorem gives a new information. In case $D_B<D_A$, Theorem \ref{intro_3k-4} needed $\lambda(A+B)<\lambda(A)+2\lambda(B)$ to conclude that $A+B$ contained an interval of size at least $\lambda(A)+\lambda(B)$ whereas Theorem \ref{intro_relaxed_3k-4} only needs $\lambda(A+B)<\lambda(A)+2\lambda(B)+D_A-D_B$ and $\lambda(A)+\lambda(B)>D_A$ to get the same conclusion.\\
\\
Some more elements on the structure of the sets $A$ and $B$ could be derived from the graphic interpretation we gave. For simplicity, we can assume $A=B$. In this case, we write $\lambda(A)=\frac12D_A+\delta$. The hypothesis of Theorem \ref{intro_relaxed_3k-4} becomes $\lambda(A+A)<D_A+\lambda(A)+2(m+1)\delta$. Since $\lambda(A+A)\leq 2D_A$ this hypothesis is fulfilled as soon as $\delta> \frac12\frac{3D_A}{2m+3}$.\\ In case $A=B$, 
 the set $[0,D_A]$ may be partitioned into the union of some disjoint intervals as follows
$$[0,D_A]=I_0^{(1)}\cup I_0^{(2)}\cup I_0^{(3)}\cup\bigcup_{k=1}^n \left(J_k\cup I_k^{(1)}\cup I_k^{(2)}\cup I_k^{(3)}\right),$$
$A$ has density $1/2$ of each interval $I_k^{(1)}$ and $I_k^{(3)}$, $\lambda(A\cap I_k^{(2)})=\frac12(\lambda(A\cap I_k^{(2)})+\Delta)$ and $\lambda(A\cap J_k)=\frac12(\lambda(A\cap J_k)-\Delta)$. Furthermore, there is a connexion in the structures of $A$ and $A+A$. This connexion is easier to explicite in the special case of extremal sets. This shall be the purpose of the next section.

\section{Small sumset and large densities: structure of the extremal sets.}
In \cite{Freiman2009IAP11}, Freiman exhibits a strong connexion in the description of $A$ and $A+A$ and reveals the structures of these sets of integers in case the size of $A+A$ is as small as it can be. In Theorem \ref{intro_equalarge} we give a similar results in the continuous setting. Our result also applies to sets $A$ and $B$ with $A\not=B$. As far as we know, no discrete analogue of this result can be found in the literature. 

\begin{proof}[Proof of Theorem \ref{intro_equalarge}]
We use the same notation as in the proof of Theorem \ref{intro_3k-4} and we assume again that $A$ and $B$ are closed bounded subsets of $\R$ such that $0=\inf {A}=\inf {B}$.
 We write $I_2=(b,D_B-c)$ with $g(b)=b$, $h(D_B-c)=D_B-c+\Delta$ and $\Delta:=\lambda(A)+\lambda(B)-D_A>0$.\\
We have on the one side
$$A+B=\left((A+B)\cap[0,b]\right)\cup (b,D_A+D_B-c)\cup \left((A+B)\cap[D_A+D_B-c,D_A+D_B]\right)$$
thus 
$$\lambda(A+B)=\lambda\left((A+B)\cap[0,b]\right)+D_A+D_B-(b+c) +\lambda\left((A+B)\cap[D_A+D_B-c,D_A+D_B]\right)$$
and on the other side 
$$\lambda(A+B)=D_B+\lambda(A)=D_B+\lambda\left(A\cap[0,b]\right)+\lambda\left(A\cap (b,D_A-c) \right)+\lambda\left(A\cap[D_A-c,D_A]\right).$$
Since $0,D_B\in B$, we have
$$A_1:=\left(A\cap[0,b]\right)\subset \left( (A+B)\cap[0,b]\right)\quad$$
and
$$D_A+D_B-A_2:= \left(D_B+\left(A\cap[D_A-c,D_A]\right)\right)\subset \left((A+B)\cap[D_A+D_B-c,D_A+D_B]\right)$$
thus
$$\left\{\begin{array}{l}
\lambda\left((A+B)\cap[0,b]\right)=\lambda\left(A_1\right), \\ 
\lambda\left((A+B)\cap[D_A+D_B-c,D_A+D_B]\right)=\lambda\left(A_2\right),\\ 
\lambda\left(A\cap (b,D_A-c) \right)=D_A-(b+c)\end{array}\right.$$
and up to a set of measure $0$, we have
$$\left\{\begin{array}{l}(A+B)\cap[0,b]=A\cap[0,b]=A_1, \\ (A+B)\cap[D_A+D_B-c,D_A+D_B]=D_B+\left(A\cap[D_A-c,D_A]\right)=D_B+D_A-A_2,\\ (b,D_A-c) \subset A.\end{array}\right.$$

Since $0, D_A\in A$, this in particular implies, up to a set of measure $0$, that
$$B\cap[0,b]\subset A_1 \quad \mbox{ and }\quad B\cap[D_B-c,D_B]\subset \left(D_B-A_2\right).$$

\bigskip

It remains now to prove the last part of the theorem concerning the density of $B$ near the border.
Let $u\in (0,b)$, we write $B_u=B\cap[0,u]$ and $A_u=A\cap[0,u]$. Since $0\in A\cap B$, these two sets are not empty. 

For $n\geq 1$ an integer, we have $B_u+A_{nu}\subset (A+B)\cap [0,(n+1)u]$ thus for $n\geq 1$ such that $(n+1)u<D_A-c$,
$$\lambda(B_u+A_{nu})\leq \lambda\left((A+B)\cap [0,(n+1)u]\right)=\lambda(A_{(n+1)u}).$$
Now we fix $u\in B\cap(0,b)$ such that $\lambda(B_u)>0$. For $n\geq 1$ we write
$$\frac{\lambda(A_{nu})}{\lambda(B_u)}=\frac{k_n(k_n-1)}{2}+k_n\delta_n=\frac{u_n(u_n-1)}2+\frac12\delta_n(1-\delta_n)$$
with $k_n\in\N^*$, $0\leq \delta_n<1$ and $u_n=k_n+\delta_n$. $(u_n)_{n\geq 0}$ is a non decreasing sequence and, since $\lambda(B_u)\leq \lambda(A_u)$,  we have $u_1\geq 2$.
 
We apply Ruzsa's lower bound \eqref{min_sum2}
$$\lambda(B_u+A_{nu})\geq \lambda(A_{nu})+\min(u_{n}\lambda(B_u), u).$$

Let $n_0$ be the maximum integer $n$ such that $(n+1)u\leq D_A+b_2-D_B$ and $u_n\leq u/\lambda(B_u)$.
For $n\leq n_0$, we have
$$\lambda(A_{(n+1)u})\geq \lambda(B_u+A_{nu})\geq\lambda(A_{nu})+u_n\lambda(B_u)$$
which implies $k_{n+1}\geq k_n+1$ and $u_{n+1}\geq u_n+1\geq n+u_1\geq n+2$.

Let $N$ be the maximal integer such that $\lambda(A_{Nu})\leq \frac12Nu$. 
Notice here that for $x\leq D_A-c$, we have $\lambda(A_x)\geq \frac12g(x)$ and for all $x> D_A-c-\Delta$, we have $g(x)> x$ thus $Nu<D_A-c$.

If $N\geq n_0+1$, then $\lambda(A_{Nu})\geq u(N-(n_0+1))+\lambda(A_{(n_0+1)u})$.
Now $u_{n_0+1}>u/\lambda(B_u)$ and $u_{n_0+1}-1\geq n_0+1$ thus 
 $\lambda(A_{Nu})\geq u(N-\frac12(n_0+1))$ which implies $N\leq n_0+1$ since $\lambda(A_{Nu})\leq \frac12Nu$.

$$\lambda(A_{Nu})\geq\frac{u_N(u_N-1)}{2}\lambda(B_u)\geq \frac12N(N+1)\lambda(B_u)$$
thus $\lambda(B_u)\leq u/(N+1)$.\\
Therefore, if $c=\sup\{x\, :\, \lambda(A_x)\leq \frac12x\}>0$, if $\lambda(B_u)>0$ and if we take $N=\max\{n\, : \, Nu\leq c\}$, we have 
$\lambda(B_u)\leq \frac{u}{N+1}\leq \frac{u^2}{c}$.\\
By symmetry and transitivity, we get the announced result.

\end{proof}

With the same arguments, we also can get the following result :
\begin{thm}
Let $A$ and $B$ be some bounded sets of real numbers such that $D_B\leq D_A$ and $\lambda(A+B)=D_A+\lambda(B)< \lambda(A)+2\lambda(B)$. Write $A'=A-\inf(A)$ and $B'=B-\inf(B)$.
Then there exists two positive real numbers $b$ and $c$ such that $b,c\leq D_B$, the interval $I=(b,D_B-c)$ has size at least $\lambda(A)+\lambda(B)-D_A=\Delta$ and, up to a set of measure $0$,
\begin{itemize}
\item $B_1=B'\cap[Õ0, b]$ is $A'$-stable, i.e. $B'\cap [0,b]=(A'+B')\cap[0,b]$;
\item $B_2=D_B-(B'\cap[D_B-c, D_B])$ is $A'$-stable;
\item $I=(b,D_B-c)\subset B'$;
\item $(b,D_A+D_B-c)\subset A'+B'$.
 \end{itemize}
 The sets $B'$ and $A'+B'$ may each be partitioned into three parts as follows
$$B'=B_1\cup I\cup (D_B-B_2), \quad 
A'+B'=B_1\cup(b,D_A+D_B-c)\cup(D_A+D_B-B_2)$$
 Furthermore $A'$ is a disjoint union of three sets $A'=A_1\cup A_I\cup D_A-A_2$ with 
 \begin{itemize}
\item $A_1\subset B_1$, $A_2\subset B_2$, $A_I\subset(b,D_A-c)$;
\item $\lambda(A_1\cap[0,x])\leq x^2/c_1$ for $x\leq c_1=\sup\{x\, :\, \lambda(B'\cap[0,x])\leq \frac12x\}$ 
\item $\lambda(A_2\cap[0,x])\leq x^2/c_2$ for $x\leq c_2=D_B-\sup\{x\, :\, \lambda(B'\cap(D_B-x,D_B))\leq \frac12x\}$
 \end{itemize}
 up to a set of measure $0$.
\end{thm}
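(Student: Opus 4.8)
The plan is to mirror exactly the argument already carried out for Theorem \ref{intro_equalarge}, with the roles of $A$ and $B$ interchanged wherever the argument is symmetric, and to handle carefully the one place where the hypothesis $\lambda(A+B)=D_A+\lambda(B)$ (rather than $D_B+\lambda(A)$) enters asymmetrically. First I would normalise so that $\inf A=\inf B=0$ and assume $A,B$ closed; this is harmless up to a set of measure $0$. I then recall the three-region decomposition of $[0,D_A]$ into $Z_1,Z_2,Z_3$ from the proof of Theorem \ref{intro_3k-4}, together with the functions $g$ and $h$ and the facts $Z_1\subset A+B-D_A$, $Z_3\subset A+B$, $Z_2\subset(A+B)\cap(A+B-D_A)$. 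The hypothesis $\lambda(A+B)<\lambda(A)+2\lambda(B)$ guarantees, as in Theorem \ref{intro_3k-4}, that there is exactly one up-crossing and no down-crossing, so $[0,D_A]$ splits into consecutive intervals $I_1=[0,b]$, $I_2=(b,D_B-c)$, $I_3=[D_B-c,D_A]$ with $g(b)=b$, $h(D_B-c)=D_B-c+\Delta$, and $I_2$ of length at least $\Delta$ (note $\Delta=\lambda(A)+\lambda(B)-D_A$ here, and $I_2$ has length $\geq\Delta$ rather than $\geq\Delta+D_A-D_B$ because it is an up-crossing).

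Next, the measure-counting step. On one side $A+B$ decomposes as $\big((A+B)\cap[0,b]\big)\cup(b,D_A+D_B-c)\cup\big((A+B)\cap[D_A+D_B-c,D_A+D_B]\big)$. On the other side, now using $\lambda(A+B)=D_A+\lambda(B)$ (not $D_B+\lambda(A)$), I would write $\lambda(A+B)=D_A+\lambda\big(B\cap[0,b]\big)+\lambda\big(B\cap(b,D_B-c)\big)+\lambda\big(B\cap[D_B-c,D_B]\big)$. Since $0,D_A\in A$, we have $B_1:=B\cap[0,b]\subset(A+B)\cap[0,b]$ and $D_A+D_B-B_2:=(D_A+(B\cap[D_B-c,D_B]))\subset(A+B)\cap[D_A+D_B-c,D_A+D_B]$; comparing the two expressions for $\lambda(A+B)$, with $\lambda\big((A+B)\cap(b,D_A+D_B-c)\big)\leq D_A+D_B-(b+c)$, forces equality in each of these inclusions, hence $B\cap[0,b]=(A+B)\cap[0,b]$ up to measure $0$ (so $B_1$ is $A'$-stable), likewise for $B_2$, and $B\cap(b,D_B-c)$ has full measure $D_B-(b+c)$ so $(b,D_B-c)\subset B$ up to measure $0$; since $0,D_B\in B$ this also gives $A\cap[0,b]\subset B_1$ and $A\cap[D_A-c,D_A]\subset D_B-B_2$ after the obvious translation bookkeeping, and the stated partitions of $B'$ and $A'+B'$ follow.

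Finally the density estimate near the endpoints, which is the only part requiring genuine work. Here I would run the iteration from the proof of Theorem \ref{intro_equalarge} with $A$ and $B$ swapped: fix $u\in A\cap(0,b)$ with $\lambda(A\cap[0,u])>0$, note $A\cap[0,nu]+B\cap[0,u]\subset(A+B)\cap[0,(n+1)u]$ whose measure is $\lambda(A\cap[0,(n+1)u])+$ (contribution of $B$) — one must check that $(A+B)\cap[0,(n+1)u]$ still has the "full on $(b,\cdot)$'' structure so that its measure is controlled, which holds as long as $(n+1)u<D_B-c$ — then apply Ruzsa's lower bound \eqref{min_sum2} to $\lambda(B_u+A_{nu})$ written in the form $\lambda(A_{nu})+\min(u_n\lambda(B_u),u)$ with $u_n$ defined through $\lambda(A_{nu})/\lambda(B_u)=k_n(k_n-1)/2+k_n\delta_n$, deduce $u_{n+1}\geq u_n+1$ while the minimum is the first term, and finally compare with the maximal $N$ such that $\lambda(A_{Nu})\leq\frac12 Nu$ to get $\lambda(B_u)\leq u/(N+1)\leq u^2/c_1$. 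The main obstacle is precisely the verification that the iteration does not run past the "good'' region: one must confirm that $c_1=\sup\{x:\lambda(B'\cap[0,x])\leq\frac12 x\}$ indeed satisfies $c_1<D_B-c$ (the analogue of the remark $Nu<D_A-c$ in the earlier proof, using that $g(x)>x$ for $x>D_B-c-\Delta$ forces $\lambda(B'\cap[0,x])>\frac12 x$ there), and that the swapped roles do not break the bound $\lambda(B_u+A_{nu})\leq\lambda(A_{(n+1)u})$ — this inequality used $(A+B)\cap[0,(n+1)u]=A\cap[0,(n+1)u]$ up to measure $0$, which came from $B_1$ being $A'$-stable together with $0\in B$, so it transfers verbatim. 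Everything else is symmetry and transitivity, exactly as at the end of the proof of Theorem \ref{intro_equalarge}.
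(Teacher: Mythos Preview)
Your approach is the same as the paper's, which simply says the result follows ``with the same arguments'' as Theorem~\ref{intro_equalarge}. Your structural part (the three-region decomposition, the single up-crossing, and the measure-counting that forces $(A+B)\cap[0,b]=B_1$, $(b,D_B-c)\subset B$, etc.) is correctly worked out and matches exactly what the paper's argument would give with $\lambda(A+B)=D_A+\lambda(B)$ in place of $D_B+\lambda(A)$.

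However, your write-up of the density estimate is not consistently swapped, and as written some formulas are wrong. You correctly begin by fixing $u\in A\cap(0,b)$ with $\lambda(A\cap[0,u])>0$, but then you revert to the un-swapped versions: you write $A\cap[0,nu]+B\cap[0,u]$ where it must be $A_u+B_{nu}$ (the small set is $A_u$, the growing set is $B_{nu}$); you claim $(A+B)\cap[0,(n+1)u]=A\cap[0,(n+1)u]$, but $B_1$ being $A'$-stable gives $(A+B)\cap[0,(n+1)u]=B\cap[0,(n+1)u]$, not $A$; Ruzsa's bound should read $\lambda(A_u+B_{nu})\geq\lambda(B_{nu})+\min(u,\,u_n\lambda(A_u))$ with $u_n$ defined via $\lambda(B_{nu})/\lambda(A_u)$; the integer $N$ must be the maximal $n$ with $\lambda(B_{Nu})\leq\tfrac12 Nu$; and the final conclusion is $\lambda(A_u)\leq u/(N+1)\leq u^2/c_1$, not $\lambda(B_u)$. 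With these corrections the iteration runs exactly as in Theorem~\ref{intro_equalarge}: the key inclusion $A_u+B_{nu}\subset B_{(n+1)u}$ holds for $(n+1)u<D_B-c$ because $(A+B)\cap[0,x]=B\cap[0,x]$ there (this uses both $B_1=(A+B)\cap[0,b]$ and $(b,D_B-c)\subset B\subset A+B$), and the inequality $\lambda(A_u)\leq\lambda(B_u)$ needed for $u_1\geq 2$ follows from $A_1\subset B_1$. Your identification of the ``main obstacle'' and its resolution are correct once the swap is carried through.
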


These theorems are almost sharp as proved by the following examples.

Let $\delta$ be a positive real number and $a_1$, $a_2$ some strictly real numbers and $n$, $m$ some positive integers such that
\begin{equation}\label{cond1}
(n-1)a_1+(m-1)a_2=1-2\delta<1. 
\end{equation}
We define the following subset of $[0,1]$:
$$A:=\left(\bigcup_{k=0}^{n-1}A_k^{(1)}\right) \cup I \cup \left(\bigcup_{k=0}^{m-1} A_k^{(2)}\right)$$ 
with $A_k^{(1)}=\left[ka_1\left(1-\frac1n\right),ka_1\right]$, $A_k^{(2)}=\left[1-ka_2,1-ka_2\left(1-\frac1m\right)\right]$ and\\ $I=\left[(n-1)a_1,1-(m-1)a_2\right]$.\\
\bigskip

\begin{tikzpicture}[xscale=14]
\draw[-][thick] (0,0) -- (.15,0);
\draw [draw=red, thick] (0.15,-.1)  -- (0.15,0.1);
\node[align=center, below] at (.175,-.1) {$A_1^{(1)}$};
\draw[-][draw=red, very thick] (.15,0) -- (.2,0);
\draw [draw=red, thick] (0.2,-.1)  -- (0.2,0.1);
\draw[-][thick] (.2,0) -- (.3,0);
\draw [draw=red, thick] (0.3,-.1)  -- (0.3,0.1);
\draw[-][draw=red, very thick] (.3,0) -- (.4,0);
\node[align=center, below] at (.35,-.1) {$A_2^{(1)}$};
\draw [draw=red, thick] (0.4,-.1)  -- (0.4,0.1);
\draw[-][thick] (.4,0) -- (.45,0);
\draw [draw=red, thick] (0.45,-.1)  -- (0.45,0.1);
\draw[-][draw=red, very thick] (.45,0) -- (.6,0);
\node[align=center, below] at (.525,-.1) {$A_3^{(1)}$};
\draw[-][draw=red, very thick] (.6,0) -- (.8,0);
\draw[-][draw=red, very thick] (.8,0) -- (.866,0);
\node[align=center, below] at (.833,-.1) {$A_2^{(2)}$};
\draw [draw=red, thick] (0.866,-.1)  -- (0.866,0.1);
\draw[-][thick] (.866,0) -- (.9,0);
\draw [draw=red, thick] (0.9,-.1)  -- (0.9,0.1);
\draw[-][draw=red, very thick] (.9,0) -- (.933,0);
\node[align=center, below] at (.9165,-.1) {$A_1^{(2)}$};
\draw [draw=red, thick] (0.933,-.1)  -- (0.933,0.1);
\draw[-][thick] (.933,0) -- (1,0);
\draw [draw=red, thick] (0,-.1) node[below]{0} -- (0,0.1);
\draw [draw=red, thick] (0.6,-.1)  -- (0.6,0.1);
\draw [draw=red, thick] (0.8,-.1)  -- (0.8,0.1);
\node[align=center, below] at (.7,-.1) {$I$};
\draw [draw=red, thick] (1,-.1) node[below]{1} -- (1,0.1);
\node[align=center, above] at (.5,.3){Set $A$};
\end{tikzpicture}

\bigskip

The set $A$ is a closed subset of $[0,1]$ of diameter $1$ and of measure $\frac12+\delta$ by \eqref{cond1}.
We easily check that
$$A+A= \left(\bigcup_{k=0}^{n-1}A_k^{(1)}\right) \cup \left[(n-1)a_1,2-(m-1)a_2\right] \cup \left(\bigcup_{k=0}^{m-1}A_k^{(2)}\right)$$
thus $\lambda(A+A)=1+\lambda(A)=D_A+\lambda(A)<3\lambda(A)$.\\
For $x=Ka_1$ with $K\leq n-1$, we have
\begin{align*}
\lambda(A\cap[0,x])&=\frac{a_1}n\sum_{k=1}^{K}k=\frac{Ka_1(K+1)}{2n}=x\frac{K+1}{2n}
\end{align*}
thus $c_1:=\sup\{x\, :\, \lambda(A\cap[0,x])\leq \frac12x\}=(n-1)a_1$ and 
\begin{align*}
\lambda(A\cap[0,x])&=x\frac{Ka_1+a_1}{2na_1}=\frac{x(x+a_1)}{2(c_1+a_1)}\geq \frac{x^2}{2c_1}.
\end{align*}
This example shows that up to a factor $1/2$, the upper bound given in Theorem \ref{equalarge} for $\lambda(B_u)$ for small $u$ is optimal. 

One can not expect a result concerning an upper bound for $\lambda(A_u)$ similar to the result we got for $\lambda(B_u)$ in Theorem \ref{equalarge} when $A$ and $B$ are different sets. 
Actually, for positive real numbers $a$, $b$ and $\varepsilon$ and a positive integer $n$ such that $a/2>b>\varepsilon$, $1-na\geq b$ and $n\varepsilon\leq a-b$ we consider the following closed subsets of $[0,1]$ of diameter $1$:  
$$A=\{0\}\cup\bigcup_{k=1}^{n} \left[ak-b-(k-1)\varepsilon, ak\right]\cup[na,1],\quad
B=\bigcup_{k=0}^{n} \left[(a-\varepsilon)k, ak\right]\cup[na,1].$$

\begin{tikzpicture}[xscale=14]
\draw[-][thick] (0,0) -- (.1,0);
\draw [draw=red, thick] (0.1,-.1)  -- (0.1,0.1);
\draw[-][draw=red, very thick] (.1,0) -- (.2,0);
\draw [draw=red, thick] (0.2,-.1)  -- (0.2,0.1);
\draw[-][thick] (.2,0) -- (.28,0);
\draw [draw=red, thick] (0.28,-.1)  -- (0.28,0.1);
\draw[-][draw=red, very thick] (.28,0) -- (.4,0);
\draw [draw=red, thick] (0.4,-.1)  -- (0.4,0.1);
\draw[-][thick] (.4,0) -- (.46,0);
\draw [draw=red, thick] (0.46,-.1)  -- (0.46,0.1);
\draw[-][draw=red, very thick] (.46,0) -- (.6,0);
\draw[-][thick] (.6,0) -- (.64,0);
\draw[-][draw=red, very thick] (.64,0) -- (.8,0);
\draw[-][draw=red, very thick]  (.8,0) -- (1,0);
\draw [draw=red, thick] (0,-.1) node[below]{0} -- (0,0.1);
\draw [draw=red, thick] (0.6,-.1)  -- (0.6,0.1);
\draw [draw=red, thick] (0.8,-.1)  -- (0.8,0.1);
\draw [draw=red, thick] (0.64,-.1)  -- (0.64,0.1);
\draw [draw=red, thick] (1,-.1) node[below]{1} -- (1,0.1);
\node[align=center, above] at (.5,.3){Set $A$};
\end{tikzpicture}

\begin{tikzpicture}[xscale=14]
\draw[-][thick] (0,0) -- (.18,0);
\draw [draw=red, thick] (0.18,-.1)  -- (0.18,0.1);
\draw[-][draw=red, very thick] (.18,0) -- (.2,0);
\draw [draw=red, thick] (0.2,-.1)  -- (0.2,0.1);
\draw[-][thick] (.2,0) -- (.36,0);
\draw [draw=red, thick] (0.36,-.1)  -- (0.36,0.1);
\draw[-][draw=red, very thick] (.36,0) -- (.4,0);
\draw [draw=red, thick] (0.4,-.1)  -- (0.4,0.1);
\draw[-][thick] (.4,0) -- (.54,0);
\draw [draw=red, thick] (0.54,-.1)  -- (0.54,0.1);
\draw[-][draw=red, very thick] (.54,0) -- (.6,0);
\draw[-][thick] (.6,0) -- (.72,0);
\draw[-][draw=red, very thick] (.72,0) -- (.8,0);
\draw[-][draw=red, very thick]  (.8,0) -- (1,0);
\draw [draw=red, thick] (0,-.1) node[below]{0} -- (0,0.1);
\draw [draw=red, thick] (0.6,-.1)  -- (0.6,0.1);
\draw [draw=red, thick] (0.8,-.1)  -- (0.8,0.1);
\draw [draw=red, thick] (0.72,-.1)  -- (0.72,0.1);
\draw [draw=red, thick] (1,-.1) node[below]{1} -- (1,0.1);
\node[align=center, above] at (.5,.3){Set $B$};
\end{tikzpicture}

\bigskip

We have 
$$A+B=\bigcup_{k=0}^{n} \left(ak-b-(k-1)\varepsilon, ak\right)\cup[na,2],$$
thus $\lambda(A+B)=1+\lambda(A)=D_B+\lambda(A)$. 
Furthermore, $\lambda(A)=1-na+nb+\frac12n(n-1)\varepsilon$ and $\lambda(B)=1-na+12n(n+1)\varepsilon$ thus $\lambda(A+B)<\lambda(A)+2\lambda(B)$ whenever $n(2a-(n+1)\varepsilon)>1$.
On the other hand for $x=aK\leq na$, we have
\begin{align*}
\lambda(A\cap[0,x])&=\sum_{k=1}^K(b+(k-1)\varepsilon)=Kb+\frac12K(K-1)\varepsilon\\
&=x\left(\frac{b}{a}+\frac{(K-1)\varepsilon}{2a}\right)\geq x\frac{b}{a}.
\end{align*}

\section{Small sets with small sumset: the extremal case. }
We now characterise the sets $A$ and $B$ such that equality holds in \eqref{min_sum2}, thus $\lambda(A+B)=\lambda(A)+(K+\delta)\lambda(B)$ with $K$ and $\delta$ defined in \eqref{defKdelta}. In \cite{Ruz}, Ruzsa gives an example of such sets $A$ and $B$. Theorem \ref{small_equa} states that his example is essentially the only kind of sets for which this equality holds.
Extremal sets will have the following shape (In this example, $K+3$ and $D_B=1$).

\begin{tikzpicture}[xscale=5]
\draw [draw=red, thick] (0.2,-.1)  -- (0.2,0.1);
\draw[-][draw=red, very thick] (0,0) -- (.2,0);
\draw[-][thick] (.2,0) -- (.9,0);
\draw [draw=red, thick] (0.9,-.1)  -- (0.9,0.1);
\draw[-][draw=red, very thick] (.9,0) -- (1,0);
\draw[-][thick] (1,0) -- (2.5,0);
\draw [draw=red, thick] (0,-.1) node[below]{0} -- (0,0.1);
\draw [draw=red, thick] (1,-.1) node[below]{1} -- (1,0.1);
\draw [thick] (2,-.1) node[below]{2} -- (2,0.1);
\node[align=center, above] at (1.25,.3){Set $B$};
\end{tikzpicture}

\begin{tikzpicture}[xscale=5]
\draw [draw=red, thick] (0.55,-.1)  -- (0.55,0.1);
\draw[-][draw=red, very thick] (0,0) -- (.55,0);
\draw[-][thick] (.55,0) -- (.9,0);
\draw [draw=red, thick] (0.9,-.1)  -- (0.9,0.1);
\draw [draw=red, thick] (1.9,-.1)  -- (1.9,0.1);
\draw[-][draw=red, very thick] (.9,0) -- (1.35,0);
\draw [draw=red, thick] (1.35,-.1)  -- (1.35,0.1);
\draw[-][thick] (1.35,0) -- (1.8,0);
\draw [draw=red, thick] (1.8,-.1)  -- (1.8,0.1);
\draw[-][draw=red, very thick] (1.8,0) -- (2.15,0);
\draw [draw=red, thick] (2.15,-.1)  -- (2.15,0.1);
\draw[-][thick] (2.15,0) -- (2.5,0);
\draw [draw=red, thick] (0,-.1) node[below]{0} -- (0,0.1);
\draw [thick] (1,-.1) node[below]{1} -- (1,0.1);
\draw [draw=red, thick] (0.2,-.1)--(0.2,0.1);
\draw [draw=red, thick] (0.4,-.1)--(0.4,0.1);
\draw [draw=red, thick] (1.2,-.1)--(1.2,0.1);
\draw [thick] (2,-.1) node[below]{2} -- (2,0.1);
\node[align=center, above] at (1.25,.3){Set $A$};
\end{tikzpicture}

\begin{proof}[Proof of Theorem \ref{small_equa}]
We assume without loss of generality that $A$ and $B$ are closed sets of $\R^+$ such that $0\in A,B$. Rescaling we also assume that $1\in B$ and ${\rm{diam}}(B)=1$. We write $\lambda$ for the Lebesgue measure on $\R$ and $\mu$ for the Haar measure on $\Tor=\R/\Z$. We write $S=A+B$ and for any positive integer $k$ and any subset $E$ of $\R^+$, we write 
$$\tilde{E}_k=\left\{x\in[0,1) \, : \, \#\{n\in\N \, : \, x+n\in E\}\geq k\right\}$$
 and  $K_E=\max\{k\in\N \, : \, \tilde{E}_k\not=\emptyset\}.$
 
 The proof will be divided into three steps.
 \begin{itemize}
\item The first step consists in determining the shape of $B$ and $\tilde{A}_k$ for positive integers $k$. To this aim, we shall follow Ruzsa's arguments in \cite{Ruz} and use Kneser's theorem in $\Tor$. \\
As noticed by Ruzsa, since $0,1\in B$, we have $\tilde{A}_{k-1}\subset \tilde{S}_k$ for $k\geq 2$ thus
\begin{equation}\label{minSk1_2}
\mu(\tilde{S}_k)\geq \mu(\tilde{A}_{k-1}) \quad (k\geq 2)
\end{equation}
and
$$\lambda(A)+1>\lambda(A+B)=\sum_{k=1}^{K_S}\mu(\tilde{S}_k)\geq \sum_{k=1}^{K_A}\mu(\tilde{A}_k)+\mu(\tilde{S}_1)=\lambda(A)+\mu(\tilde{S}_1)$$
thus  $\mu(\tilde{S}_1)<1$ and Raikov's theorem yields
\begin{equation}\label{minSk2}
\mu(\tilde{S}_k)\geq \mu(\tilde{A}_k)+\mu(B) \quad (k\leq K_A).
\end{equation}
Combining \eqref{minSk1_2} and \eqref{minSk2} leads to
$$\mu(\tilde{S}_k)\geq\frac{k-1}{K_A}\mu(\tilde{A}_{k-1})+ \frac{K_A+1-k}{K_A}(\mu(\tilde{A}_k)+\mu(B) )\quad (1\leq k\leq K_A+1)$$
and 
\begin{equation}\label{minSK}
\lambda(A+B)\geq \frac{K_A+1}{K_A}\lambda(A)+\frac{K_A+1}{2}\lambda(B),
\end{equation}
which is Ruzsa's lower bound.
Therefore, the hypothesis $\lambda(A+B)=\lambda(A)+(K+\delta)\lambda(B)=  \frac{K+1}{K}\lambda(A)+\frac{K+1}{2}\lambda(B)$ implies $K_A=K$,  and 
\begin{equation}\label{muSk}
\left\{
\begin{array}{ll}
\mu(\tilde{S}_k)= \mu(\tilde{A}_{k-1}) & (2\leq k\leq K+1)\\
\mu(\tilde{S}_k)= \mu(\tilde{A}_k)+\mu(B) & (1\leq k\leq K)\\
\mu(\tilde{S}_k)=0 &( k\geq K+2)
\end{array} 
\right.
\end{equation}
For $1\leq k\leq K$, we have $\tilde{A}_k+B\subset \tilde{S}_k$ thus the second line in \eqref{muSk} implies that we have equality in Macbeath's inequality, meaning $\mu(\tilde{A}_k+B)=\mu(\tilde{A}_k)+\mu(B)$ and by Kneser's theorem in $\Tor$ there exists $m_k\in\N^*$, there exist two closed intervals $I_k$ and $J_k$ of $\Tor$  such that  $m_k\tilde{A}_k\subset I_k$, $m_k B\subset J_k$ and $\mu(I_k)=\mu(\tilde{A}_k)$, $\mu(J_k)=\mu(B)$.\\
Now $m_k B\subset J_k$ with $\mu(J_k)=\mu(B)$ and $m_\ell B\subset J_\ell$ with $\mu(J_\ell)=\mu(B)$ implies $m_k=m_\ell$ and $J_k=J_\ell$. Let us write $J=J_k$ and $m=m_k$. We thus have for some $m\in\N^*$, up to sets of measure $0$ :\\
\begin{equation}\label{Sk}
\left\{
\begin{array}{ll}
m\tilde{A}_k= I_k, &  mB= J.\\
\tilde{S}_k= \tilde{A}_{k-1} & (2\leq k\leq K+1)\\
\tilde{S}_k= \tilde{A}_k+B & (1\leq k\leq K)\\
\tilde{S}_k=\emptyset &( k\geq K+2)
\end{array} 
\right.
\end{equation}
This implies $I_{k-1}=I_k+J$ for $2\leq k\leq K$ thus $I_k=I_K+(K-k)J$ for $1\leq k\leq K$. Since 
$$\lambda(A)=\sum_{k=1}^K\mu(\tilde{A}_k)=\sum_{k=1}^K\mu(m\tilde{A}_k)=\sum_{k=1}^K\mu(I_k),$$
we get, using $\mu(J)=\mu(B)$ and the definition of $K$ and $\delta$, that 
$$\mu(I_K)=\frac1K\lambda(A)-\frac{K-1}{2}\lambda(B)=\delta\lambda(B).$$
Now, we write $b=\lambda(B)$. We proved that we have, up to sets of measure $0$,
$$m\tilde{A}_k=I_k= I_K+(K-k)J,\, \mbox{ with }\,\mu(I_K)=\delta b\,\mbox{ and }\,\mu(J)=b.$$ 

Since $0\in B$, we can write $J=[-b_2,b_1]$ with $0\leq b_1,b_2$ and $b=b_1+b_2$. 
We also write $I_K=[\alpha,\beta]$ with $\beta-\alpha=\delta b$ and $0\in[\alpha-(K-1)b_2,\beta+(K-1)b_1]$ (since $0\in A$).
Hence we have 
$$I_k=[\alpha-(K-k)b_2, \beta+(K-k)b_1]$$ for $1\leq k\leq K$ and 
$\tilde{A}_k=\bigcup_{\ell=0}^mI_\ell^k \quad\mbox{ with }\quad$
$$ I_\ell^k=\frac{\ell}{m}+\frac1m I_k=\left[\frac1m(\ell+\alpha-(K-k)b_2),\frac1m(\ell+\beta+(K-k)b_1)\right],$$
whereas $B=\bigcup_{\ell=0}^mB_\ell\quad\mbox{ with }\quad $
$B_0=\left[0,\frac{b_1}{m}\right], \, B_m=\left[1-\frac{b_2}{m},1\right], \, $
$${\rm and} \, B_\ell=\left[\frac{\ell-b_2}{m},\frac{\ell+b_1}{m}\right] \, \mbox{ if }\, 1\leq\ell\leq m-1.$$

These sets are therefore of the following form.

\begin{tikzpicture}[xscale=12]
\draw [thick] (0.25,0.9)  -- (0.25,1.1);
\draw [thick] (0.5,0.9)  -- (0.5,1.1);
\draw [thick] (0.75,0.9)  -- (0.75,1.1);
\draw [thick] (1,0.9)  -- (1,1.1);
\draw[-][draw=red, very thick] (0,1) -- (.05,1);
\node [above] at (0.025,1){$B_0$};
\draw [draw=red, thick] (0.05,1)  -- (0.05,1.1);
\draw[-][thick] (.05,1) -- (.23,1);
\draw [draw=red, thick] (0.23,1)  -- (0.23,1.1);
\draw[-][draw=red, very thick] (0.23,1) -- (.3,1);
\node [above] at (0.265,1){$B_1$};
\draw [draw=red, thick] (0.3,1)  -- (0.3,1.1);
\draw[-][thick] (.3,1) -- (.48,1);
\draw [draw=red, thick] (0.48,1)  -- (0.48,1.1);
\draw[-][draw=red, very thick] (0.48,1) -- (.55,1);
\draw [draw=red, thick] (0.55,1)  -- (0.55,1.1);
\draw[-][thick] (.55,1) -- (.73,1);
\draw [draw=red, thick] (0.73,1)  -- (0.73,1.1);
\draw[-][draw=red, very thick] (0.73,1) -- (.8,1);
\draw [draw=red, thick] (0.8,1)  -- (0.8,1.1);
\draw[-][thick] (.8,1) -- (.98,1);
\draw [draw=red, thick] (0.98,1)  -- (0.98,1.1);
\draw[-][draw=red, very thick] (0.98,1) -- (1,1);
\draw [thick] (0,0.9) node[below]{0} -- (0,1.1);
\draw [thick] (1,0.9) node[below]{1} -- (1,1.1);
\node [above] at (0.99,1){$B_m$};
\node[align=center] at (1.1,1){Set $B$};

\draw [dashed] (0.55,1) -- (0.56,0);
\draw [dashed] (0.5,1) -- (0.51,0);
\draw [dashed] (0.5,1) -- (0.47,0);
\draw [dashed] (0.48,1) -- (0.45,0);

\draw [thick] (0.25,-.1)  -- (0.25,0.1);
\draw [thick] (0.5,-.1)  -- (0.5,0.1);
\draw [thick] (0.75,-.1)  -- (0.75,0.1);
\draw [thick] (1,-.1)  -- (1,0.1);
\draw[-][draw=blue, very thick] (0,0) -- (.06,0);
\node [above] at (0.03,0){$I_0^1$};
\draw [draw=blue, thick] (0.01,0)  -- (0.01,.1);
\draw [draw=blue, thick] (0.06,0)  -- (0.06,.1);
\draw[-][thick] (.06,0) -- (.2,0);
\draw [draw=blue, thick] (0.22,0)  -- (0.22,.1);
\draw [draw=blue, thick] (0.2,0)  -- (0.2,.1);
\draw[-][draw=blue, very thick] (0.2,0) -- (.31,0);
\node [above] at (0.25,0){$I_1^1$};
\draw [draw=blue, thick] (0.26,0)  -- (0.26,.1);
\draw [draw=blue, thick] (0.31,0)  -- (0.31,.1);
\draw[-][thick] (.31,0) -- (.45,0);
\draw [draw=blue, thick] (0.45,0)  -- (0.45,.1);
\draw [draw=blue, thick] (0.47,0)  -- (0.47,.1);
\draw[-][draw=blue, very thick] (0.45,0) -- (.56,0);
\draw [draw=blue, thick] (0.51,0)  -- (0.51,.1);
\draw [draw=blue, thick] (0.56,0)  -- (0.56,.1);
\draw[-][thick] (.56,0) -- (.7,0);
\draw [draw=blue, thick] (0.7,0)  -- (0.7,.1);
\draw [draw=blue, thick] (0.72,0)  -- (0.72,.1);
\draw[-][draw=blue, very thick] (0.7,0) -- (.81,0);
\draw[-][thick] (.81,0) -- (.95,0);
\draw [draw=blue, thick] (0.76,0)  -- (0.76,.1);
\draw [draw=blue, thick] (0.81,0)  -- (0.81,.1);
\draw[-][draw=blue, very thick] (0.95,0) -- (1,0);
\draw [draw=blue, thick] (0.95,0)  -- (0.95,.1);
\draw [draw=blue, thick] (0.97,0)  -- (0.97,.1);
\draw [thick] (0,-.1) node[ below]{0} -- (0,0.1);
\draw [thick] (1,-.1) node[below]{1} -- (1,0.1);
\node[align=center] at (1.1,0){Set $\tilde{A_1}$};

\draw [thick] (0.25,-.7)  -- (0.25,-.9);
\draw [thick] (0.5,-.7)  -- (0.5,-.9);
\draw [thick] (0.75,-.7)  -- (0.75,-.9);
\draw [thick] (1,-.7)  -- (1,-.9);
\draw[-][draw=blue, very thick] (0,-.8) -- (.01,-.8);
\draw [draw=blue, thick] (0.01,-.8)  -- (0.01,-.7);
\draw[-][thick] (.01,-.8) -- (.22,-.8);
\draw [draw=blue, thick] (0.22,-.8)  -- (0.22,-.7);
\draw[-][draw=blue, very thick] (0.22,-.8) -- (.26,-.8);
\node [above] at (0.24,-.8){$I_1^2$};
\draw [draw=blue, thick] (0.26,-.8)  -- (0.26,-.7);
\draw[-][thick] (.26,-.8) -- (.47,-.8);
\draw [draw=blue, thick] (0.47,-.8)  -- (0.47,-.7);
\draw[-][draw=blue, very thick] (0.47,-.8) -- (.51,-.8);
\draw [draw=blue, thick] (0.51,-.8)  -- (0.51,-.7);
\draw[-][thick] (.51,-.8) -- (.72,-.8);
\draw [draw=blue, thick] (0.72,-.8)  -- (0.72,-.7);
\draw[-][draw=blue, very thick] (0.72,-.8) -- (.76,-.8);
\draw[-][thick] (.76,-.8) -- (.97,-.8);
\draw [draw=blue, thick] (0.76,-.8)  -- (0.76,-.7);
\draw[-][draw=blue, very thick] (0.97,-.8) -- (1,-.8);
\draw [draw=blue, thick] (0.97,-.8)  -- (0.97,-.7);
\draw [thick] (0,-.9) node[ below]{0} -- (0,-.7);
\draw [thick] (1,-.9) node[below]{1} -- (1,-.7);
\node[align=center] at (1.1,-.8){Set $\tilde{A_2}$};

\draw [dashed, blue] (0.51,0) -- (0.51,-0.8);
\draw [dashed, blue] (0.47,0) -- (0.47,-0.8);


\end{tikzpicture}

Note that we also have 
$\tilde{S}_k=\bigcup_{\ell=0}^mI_\ell^k \quad\mbox{ with }\quad$
$$ I_\ell^k=\frac{\ell}{m}+\frac1m I_k=\left[\frac1m(\ell+\alpha-(K+1-k)b_2),\frac1m(\ell+\beta+(K+1-k)b_1)\right],$$
for $0\leq k\leq K$, where the definition of $I_0$ is the obvious extension of those of $I_K$ for $k\geq 1$.
 
Since $0\in A$, we also can write
$I_1=[-a_2,a_1]$ with $0\leq a_1,a_2$ and $a_1+a_2=(K-1+\delta) b$.
We thus have 
$$A_\ell=A\cap\left[\frac{\ell-a_2}{m},\frac{\ell+a_1}{m}\right] \, \mbox{ if }\, \ell\geq 0$$ 
and
$$A+B= \bigcup_{\ell\geq 0}\bigcup_{k=0}^{m}(A_\ell+B_k)= \bigcup_{i\geq0}S_i$$
with $$S_i=S\cap\left[\frac{i-(a_2+b_2)}{m},\frac{i+(a_1+b_1)}{m} \right]=\bigcup_{k,\ell\geq 0\, :\, k+\ell=i}(A_\ell+B_k). $$

We now introduce 
$$A_\ell^k=A_\ell\cap \left[\frac\ell{m}+\frac{\alpha-(K-k)b_2}{m}, \frac\ell{m}+\frac{\beta+(K-k)b_1}{m}\right]$$ and
$$S_\ell^k=S_\ell\cap \left[\frac\ell{m}+\frac{\alpha-(K-k)b_2}{m}, \frac\ell{m}+\frac{\beta+(K-k)b_1}{m}\right].$$

\item As a second step, we shall prove that for any positive integer $\ell$, the set $A_\ell^K$ is either empty or equal to $I_{\ell}^K$.\\
For $r$ such that $0\leq r\leq m-1$, we write $L(r)$ for the set of integers $\ell\geq 0$ such that $A$ does not intersect
$$\left[\frac{\ell m+r}{m}+\max\left(\frac{\alpha}{m};\frac{\beta-b_1(1+\delta)/2}{m}\right), \frac{\ell m+r}{m}+\min\left(\frac{\beta}{m};\frac{\alpha+b_2(1+\delta)/2}{m}\right)\right].$$
Since $\left[\frac{r}{m}+\max\left(\frac{\alpha}{m};\frac{\beta-b_1(1+\delta)/2}{m}\right), \frac{r}{m}+\min\left(\frac{\beta}{m};\frac{\alpha+b_2(1+\delta)/2}{m}\right)\right]\subset I_r^K\subset\tilde{A_K}$, the set $L(r)$ contains at least $K$ elements.\\
Furthermore $A+B_0\subset S$, hence for $\ell\in L(r)$ we must have 
$$\left[\frac{\ell m+r}{m}+\frac{\beta}{m}, \frac{\ell m+r}{m}+\frac{\beta}{m}+\frac{b_1(1-\delta)/2}{m}\right]\subset S_{\ell m+r}^{K-1}.$$
Now $$\left(\frac{r}{m}+\frac{\beta}{m}, \frac{r}{m}+\frac{\beta}{m}+\frac{b_1(1-\delta)/2}{m}\right)\subset \tilde{S}_{K}\setminus \tilde{S}_{K+1},$$ thus for $\ell\not\in L(r)$
$$\left(\frac{\ell m+r}{m}+\frac{\beta}{m}, \frac{\ell m+r}{m}+\frac{\beta}{m}+\frac{b_1(1-\delta)/2}{m}\right)\cap S=\emptyset.$$
Since $A+B_0\subset S$, it implies that for $\ell\not\in L(r)$
\begin{equation}\label{IK1}
\left(\frac{\ell m+r}{m}+\frac{\beta}{m}-\frac{b_1}m, \frac{\ell m+r}{m}+\frac{\beta}{m}+\frac{b_1(1-\delta)/2}{m}\right)\cap A=\emptyset.
\end{equation}
Similarly, since $A+B_m\subset S$, for $\ell\in L(r)$ we must have 
$$\left[\frac{(\ell+1) m+r}{m}+\frac{\alpha}{m}-\frac{b_2(1-\delta)/2}{m}, \frac{(\ell+1) m+r}{m}+\frac{\alpha}{m}\right]\subset S_{(\ell+1) m+r}^{K-1}$$
and
$$\left(\frac{(\ell+1) m+r}{m}+\frac{\alpha}{m}-\frac{b_2(1-\delta)/2}{m}, \frac{(\ell+1) m+r}{m}+\frac{\alpha}{m}\right)\cap S=\emptyset$$
for $\ell\not\in L(r)$ and since $A+B_m\subset S$, it implies that for $\ell\not\in L(r)$
\begin{equation}\label{IK2}
\left(\frac{\ell m+r}{m}+\frac{\alpha}{m}-\frac{b_2(1-\delta)/2}{m}, \frac{\ell m+r}{m}+\frac{\alpha}{m}+\frac{b_2}{m}\right)\cap A=\emptyset
\end{equation}
Now, $\beta-\alpha=\delta b<b=b_1+b_2$ so $$\frac{\ell m+r}{m}+\frac{\alpha}{m}+\frac{b_2}{m}>\frac{\ell m+r}{m}+\frac{\beta}{m}-\frac{b_1}m$$
thus \eqref{IK1} and \eqref{IK2} imply  for $\ell\not\in L(r)$ that
$$\left(\frac{\ell m+r}{m}+\frac{\alpha}{m}-\frac{b_2(1-\delta)/2}{m},\frac{\ell m+r}{m}+\frac{\beta}{m}+\frac{b_1(1-\delta)/2}{m} \right)\cap A=\emptyset.$$
In particular 
$$A_{\ell m+r}^{K}=\begin{cases}I_{\ell m+r}^{K} & \mbox{ if } \ell\in L(r)\\\emptyset &\mbox{ otherwise.}\end{cases}$$

\item In this third part of the proof, we prove that $L(r)$ is a set of $K$ consecutive integers and we extend the previous arguments to determine the shape of $A$ and $S$. We conclude the proof by showing that $m=1$.

Since $0,1\in B$, for any $r$ such that $0\leq r\leq m-1$ and $\ell\geq 0$, we have 
$$A_{\ell m+r}^{K}\subset S_{\ell m+r}^{K} \quad \mbox{ and }\quad 1+A_{\ell m+r}^{K}\subset S_{(\ell+1) m+r}^{K},$$
and since 
$\tilde{S}_{K+2}=\emptyset$, we have that for any $r\in [0,m-1]$, $\#\{L(r)\cup(1+L(r))\}\leq K+1$, thus $L(r)$ is the set of $K$ consecutive integers and there exists $\ell_0(r)\geq 0$ such that $L(r)=\{\ell_0(r),\ell_0(r)+1,\cdots, \ell_0(r)+K-1\}$.
For $\ell\in L(r)$, 
$$\left[\frac{\ell m+r}{m}+\frac{\alpha}{m}, \frac{\ell m+r}{m}+\frac{\beta}{m}\right] \subset A$$ thus
$$\left[\frac{\ell m+r}{m}+\frac{\beta}{m}, \frac{\ell m+r}{m}+\frac{\beta+b_1}{m}\right] \subset S_{\ell m+r}^{K-1}$$ thus for $\ell\not\in L(r)$, we have
$$\left(\frac{\ell m+r}{m}+\frac{\beta}{m}, \frac{\ell m+r}{m}+\frac{\beta+b_1}{m}\right) \cap S= \emptyset$$ 
which implies that for $\ell\not\in \{\ell_0(r),\ell_0(r)+1,\cdots, \ell_0(r)+K-2\}$, we have 
$$\left(\frac{\ell m+r}{m}+\frac{\beta}{m}, \frac{\ell m+r}{m}+\frac{\beta+b_1}{m}\right) \cap A= \emptyset$$ 
because $A,A+1\subset S$.
Similarly, using $A+B_m\subset S$, we get
for $\ell\not\in \{\ell_0(r),\ell_0(r)+1,\cdots, \ell_0(r)+K-2\}$, we have 
$$\left(\frac{(\ell+1) m+r}{m}+\frac{\alpha-b_1}{m}, \frac{(\ell+1) m+r}{m}+\frac{\alpha}{m}\right) \cap A= \emptyset$$ 
and 
$$\left(\frac{\ell_0(r) m+r}{m}+\frac{\alpha-b_1}{m}, \frac{\ell_0(r) m+r}{m}+\frac{\alpha}{m}\right) \cap A= \emptyset.$$ 
Therefore we have
$$A_{\ell m+r}^{K-1}=\begin{cases}
\left[\frac{\ell m+r}{m}+\frac{\alpha-b_2}{m}, \frac{\ell m+r}{m}+\frac{\beta+b_1}{m}\right] & \mbox{ if } \ell_0(r)+1\leq\ell\leq\ell_0(r)+K-2\\
\left[\frac{\ell m+r}{m}+\frac{\alpha-b_2}{m}, \frac{\ell m+r}{m}+\frac{\beta}{m}\right] &\mbox{ if } \ell=\ell_0(r)+K-1\\
\left[\frac{\ell m+r}{m}+\frac{\alpha}{m}, \frac{\ell m+r}{m}+\frac{\beta+b_1}{m}\right] & \mbox{ if } \ell=\ell_0(r)\\
 \emptyset &\mbox{ otherwise.}\end{cases}$$
Iterating these arguments, we get that
$$A=\bigcup_{r=0}^{m-1}\bigcup_{j=0}^{K-1}\left[\frac{(j+\ell_0(r)) m+r}{m}+\frac{\alpha-jb_2}{m}, \frac{(j+\ell_0(r)) m+r}{m}+\frac{\beta+(K-1-j)b_1}{m}\right]$$
and
$$S=\bigcup_{r=0}^{m-1}\bigcup_{j=0}^{K}\left[\frac{(j+\ell_0(r)) m+r}{m}+\frac{\alpha-jb_2}{m}, \frac{(j+\ell_0(r)) m+r}{m}+\frac{\beta+(K-j)b_1}{m}\right].$$
Since $S=A+B$, for this to be true, we need $m=1$. Actually, if it were not the case, then $A+B_1\subset S$ would imply that $\ell_0(r)$ is independant of $r$ and that
 $\left[\frac{\ell_0 m+1}{m}+\frac{\alpha-b_2}{m}, \frac{\ell_0 m+1}{m}+\frac{\beta+Kb_1}{m}\right]\subset S$, which is not the case.
 Therefore $m=1$ and
 $$A=\bigcup_{j=0}^{K-1}\left[j+\ell_0+\alpha-jb_2, j+\ell_0+\beta+(K-1-j)b_1\right].$$
 Since $0=\inf A$, we must have $\ell_0=0$ and $\alpha=0$ and
 $$A=\bigcup_{j=0}^{K-1}\left[j-jb_2, j+\delta b+(K-1-j)b_1\right].$$
 With such an $A$, we easily check that
 $$A+B=\bigcup_{j=0}^{K}\left[j-jb_2, j+\delta b+(K-j)b_1\right]$$
 and $\lambda(A+B)=(K+1)\delta b+b\frac{K(K+1)}{2}=\lambda(A)+(K+\delta)B$.

\end{itemize}

\end{proof}

\section{Acknowledgement}
The author warmly thanks Professor Ruzsa for very useful conversations. She is also indebted to Professor Serra who gave a talk at the conference Additive Combinatorics in Bordeaux which helped her to get a perspective on some works in the discrete setting related to hers. He also helped her to improve the presentation of this paper. She is also grateful to Pablo Candela for bringing to her knowledge Grynkiewicz's book and for his careful reading of this paper.

\bibliographystyle{alpha}
\bibliography{biblio_Freiman}

\begin{thebibliography}{Ruz91}

\bibitem[BG10]{Bardaji_Grynkiewicz}
Itziar Bardaji and David~J. Grynkiewicz.
\newblock Long arithmetic progressions in small sumsets.
\newblock {\em Integers}, 10:A28, 335--350, 2010.

\bibitem[Chr]{Cr}
Michael Christ.
\newblock Near equality in the riesz-sobolev inequality.
\newblock {\em arXiv}.

\bibitem[Fre59]{Freiman1959}
G.~A. Fre{\u\i}man.
\newblock The addition of finite sets. {I}.
\newblock {\em Izv. Vys\v s. U\v cebn. Zaved. Matematika}, 1959(6
  (13)):202--213, 1959.

\bibitem[Fre62]{Freiman1962}
G.~A. Fre{\u\i}man.
\newblock Inverse problems of additive number theory. {VI}. {O}n the addition
  of finite sets. {III}.
\newblock {\em Izv. Vys\v S. U\v cebn. Zaved. Matematika}, 1962(3
  (28)):151--157, 1962.

\bibitem[Fre09]{Freiman2009IAP11}
Gregory~A. Freiman.
\newblock Inverse additive number theory. {XI}. {L}ong arithmetic progressions
  in sets with small sumsets.
\newblock {\em Acta Arith.}, 137(4):325--331, 2009.

\bibitem[Gry13]{Gr}
David~J. Grynkiewicz.
\newblock {\em Structural additive theory}, volume~30 of {\em Developments in
  Mathematics}.
\newblock Springer, Cham, 2013.

\bibitem[LS95]{Lev_Smeliansky}
Vsevolod~F. Lev and Pavel~Y. Smeliansky.
\newblock On addition of two distinct sets of integers.
\newblock {\em Acta Arith.}, 70(1):85--91, 1995.

\bibitem[Nat96]{Nathanson}
Melvyn~B. Nathanson.
\newblock {\em Additive number theory}, volume 165 of {\em Graduate Texts in
  Mathematics}.
\newblock Springer-Verlag, New York, 1996.
\newblock Inverse problems and the geometry of sumsets.

\bibitem[Rai39]{Raikov}
D.~Raikov.
\newblock On the addition of point-sets in the sense of {S}chnirelmann.
\newblock {\em Rec. Math. [Mat. Sbornik] N.S.}, 5(47):425--440, 1939.

\bibitem[Ruz91]{Ruz}
Imre~Z. Ruzsa.
\newblock Diameter of sets and measure of sumsets.
\newblock {\em Monatsh. Math.}, 112(4):323--328, 1991.

\bibitem[Sta96]{stanchescu}
Yonutz Stanchescu.
\newblock On addition of two distinct sets of integers.
\newblock {\em Acta Arith.}, 75(2):191--194, 1996.

\bibitem[TV06]{Tao_Vu}
Terence Tao and Van Vu.
\newblock {\em Additive combinatorics}, volume 105 of {\em Cambridge Studies in
  Advanced Mathematics}.
\newblock Cambridge University Press, Cambridge, 2006.

\end{thebibliography}


%
%
%
%
%
%
%
%
%
%

\end{document}